\tikzset{every picture/.style={line width=0.75pt}}
\theoremstyle{plain}
\newtheorem{theorem}{Theorem}[section]
\newtheorem{corollary}[theorem]{Corollary}
\newtheorem{lemma}[theorem]{Lemma}
\newtheorem{proposition}[theorem]{Proposition}
\theoremstyle{definition}
\theoremstyle{remark}
\newtheorem*{remark}{Remark}
\title{On the $Z_q$-forcing number:\\ computational approach and exact values}
\author{Aida Abiad\thanks{\texttt{a.abiad.monge@tue.nl},  Department of Mathematics and Computer Science, Eindhoven University of Technology, The Netherlands}\thanks{Department of Mathematics and Data Science of Vrije Universiteit Brussel, Belgium}
\qquad Maryam Moghaddas \thanks{\texttt{sanazmoghaddas1380@gmail.com}, 
Department of Mathematical Sciences, Sharif University of Technology, Iran}
}
\date{}
\begin{document}

\maketitle

\begin{abstract}
Zero forcing is a graph coloring process that is used to model spreading phenomena in real-world scenarios. It can also be viewed as a single-player combinatorial game on a graph, where the player's goal is to select a subset of vertices of minimum cardinality that eventually leads to all vertices of the graph being colored. A variant of this game, called the $q$-analogue of zero forcing, was later introduced. In this version, the player again seeks to choose the smallest number of vertices that will eventually color the entire graph, while an oracle attempts to force the player to select a larger subset.

In this paper, we exploit the structural properties of several graph classes in order to both derive algorithms to compute the exact value of $Z_q$, and to establish bounds and exact values of the parameter for these graph classes. In particular, we present a SAT-based algorithm to compute the $q$-analogue zero forcing number, offering optimal strategies for both the player and the oracle. Additionally, we propose a polynomial-time algorithm for computing the $q$-analogue zero forcing number for $q=1$ of cactus graphs. Lastly, we prove the exact value of this parameter for several graph classes such as block graphs. Our work extends previous results about trees by Butler et al. (2020) and Blanco et al. (2024).\\

\noindent\textbf{Keywords:} zero forcing, variants of zero forcing, block graphs, polynomial algorithm.
\end{abstract}

%%%%%%%%%%%%%%%%%%%%%%%%%%%%%%%%%%%%%%%%%%%%%%%%%%%%%%%%%%%%%%%%%%%%%%%%%%%%%%%%%%%%%
\section{Introduction}
%%%%%%%%%%%%%%%%%%%%%%%%%%%%%%%%%%%%%%%%%%%%%%%%%%%%%%%%%%%%%%%%%%%%%%%%%%%%%%%%%%%%%

In this paper, we focus on zero forcing, a combinatorial single-player game played on a graph, whose objective is to color all vertices of the graph starting from a subset of minimum size. Equivalently, one may view the process as the player placing tokens on a subset of vertices to color them, and then propagating the coloring to the remaining vertices according to the zero forcing rules. The player can employ two rules: spending a token to fill a vertex, and applying the forcing rule. More formally, the rules are as follows:  
\begin{description}
    \item[Rule 1:] Filling any vertex at the cost of one token.
    \item[Rule 2:] Applying the \emph{forcing rule} at no cost (a filled vertex with a unique unfilled neighbor forces that neighbor to be filled).
\end{description}  
The minimum number of tokens required to fill all vertices in a graph $G$ is denoted by $Z(G)$ and is referred to as the \emph{zero forcing number} of $G$.

The forcing process can be regarded as a specific type of spreading mechanism on graphs, extensively studied in mathematics and computer science. For instance, it was proposed in \cite{PhysRevLett.99.100501} 
and was later applied in \cite{PhysRevA.79.060305} as a tool for establishing quantum controllability, and it is used as a bound for the minimum rank (or equivalently, the maximum nullity) of a graph \cite{AIMMINIMUMRANKSPECIALGRAPHSWORKGROUP20081628}. Zero forcing also finds applications in areas such as physics, statistical mechanics, and social network analysis, where graph-based processes are used to model technical phenomena or social behavior. See \cite{dynamicalsystemsZ} for an overview of these models and applications. Robustness and resilience of complex networks, which has been investigated in the literature by several authors (see e.g. \cite{AGDetal2024, JI20231}), has also been approached using a certain forcing process in graphs \cite{A2023}.

In the present paper we will be mainly concerned with a variant of this game known as the $q$-analogue of the zero forcing, which was introduced by Butler at al. in \cite{butler2015using} and studied in \cite{butler2020properties, fallat2024q,blanco2023z_q}. The $q$-analogue of zero forcing (or $Z_q$ game) introduces a third rule and transforms the game into a two-player setting between a player and an oracle. This additional rule allows the player to apply the forcing rule within an induced subgraph of the original graph, potentially reducing the amount of required tokens. Let $F$ denote the set of filled vertices, and let $U_1, U_2, \dots, U_k$ be the connected components of $G[V \setminus F]$ (the subgraph induced by the unfilled vertices). The parameter $q$ is a fixed integer constant. The new rule is as follows:

\begin{description}
    \item[Rule 3:] If $k > q$, the player selects at least $q+1$ of the components $U_i$. The oracle then chooses a nonempty subset of these components, denoted $\{U_{i_1}, U_{i_2}, \dots, U_{i_l}\}$. The player is then allowed to apply the forcing rule on the subgraph induced by $F \cup U_{i_1} \cup U_{i_2} \cup \cdots \cup U_{i_l}$.
\end{description}  

The \emph{$q$-analogue zero forcing number}, denoted as $Z_q(G)$, is the minimum tokens needed to fill $G$, assuming an adversarial oracle that maximizes this requirement.  Thus, while zero forcing is a combinatorial game on graphs where a player aims to fill all vertices using the minimum number of tokens following specific rules, the $q$-analogue of zero forcing generalizes this game by introducing an adversarial oracle, enabling the player to operate on smaller induced subgraphs. Note that the parameter $Z_q$, when was introduced by Butler et al. \cite{butler2015using} was called  \textit{zero forcing with $q$ negative eigenvalues} and was proposed as a generalization of both zero forcing and semi-definite zero forcing by allowing the player to operate on a smaller graph with multiple components.

Although the zero forcing game and its $q$-analogue game share similarities, the presence of the oracle makes the latter more complex, and consequently multiple zero forcing techniques do not naturally extend. For instance, as mentioned in \cite{butler2015using}, unlike in zero forcing, Rule 1 moves cannot always precede Rule 2 moves, as this may advantage the oracle and increase the required tokens. In zero forcing, all tokens can be assumed to be placed at the start, with Rule 2 applied only after Rule 1, which has greatly shaped computational approaches. Consequently, while several computational techniques exist for zero forcing \cite{bbfh2019, bmh2021, aazami2008hardness, A2010}, less is known for the $q$-analog version of the zero forcing game.

In this paper, we exploit structural properties of several graph classes in order to both derive algorithms to compute the exact value of $Z_q$ and to establish bounds for certain graph classes. In particular, we present polynomial-time algorithms for $Z_0$ in cactus graphs and for $Z_q$ in block graphs, thus extending known results for trees by Butler et al. \cite[Theorem 6]{butler2020properties}, who proposed a polynomial-time algorithm for computing $Z_1$ in trees. Despite the fact that $Z_q$ is probably an NP-hard parameter (its complexity remains open, to the best of our knowledge), its exact value has been determined for certain graph classes. Instances of it are the work of Butler et al. \cite{butler2020properties} (exact values of $Z_0$ for trees and $Z_q$ for paths) and Fallat et al. \cite{fallat2024q} (exact values of $Z_q$ for complete bipartite graphs, strongly regular graphs, and some graphs in a certain association scheme). Blanco et al. \cite{blanco2023z_q} provided bounds for caterpillar cycles, an upper bound for $Z_q$ in trees, and exact values for forests of star graphs. We should note that Blanco et al. approached the study of $Z_q$ from an adversarial perspective, modeling the player and the oracle as opponents and developing optimal strategies for both. In this paper we use a similar approach to show the value of $Z_q$ for block graphs, generalized star graphs, and generalized windmill graphs, providing optimal strategies for both the player and the oracle. %Notably, Butler et al.'s results on trees and paths, along with Blanco et al.'s results on forests of star graphs, were instrumental in our findings on generalized star graphs.

This paper is structured as follows. In Section \ref{sec:preliminaries} we set up some preliminaries on zero forcing. In Section \ref{sec:computational} we explore computational approaches to the $q$-analogue zero forcing, including a SAT-based algorithm for computing $Z_q$, polynomial-time algorithms for $Z_0$ in cactus graphs, and $Z_q$ in block graphs. Finally, in Section \ref{sec:exactvaluegraphclasses} we examine generalized star graphs, block graphs, and windmill graphs, providing exact values along with optimal strategies for both the player and the oracle.

%%%%%%%%%%%%%%%%%%%%%%%%%%%%%%%%%%%%%%%%%%%%%%%%%%%%%%%%%%%%%%%%%%%%%%%%%%%%%%%%%%%%%
\section{Preliminaries}\label{sec:preliminaries}
%%%%%%%%%%%%%%%%%%%%%%%%%%%%%%%%%%%%%%%%%%%%%%%%%%%%%%%%%%%%%%%%%%%%%%%%%%%%%%%%%%%%%

In this paper, we adopt standard graph-theoretic notation. For a graph $G$, let $V(G)$ denote its set of vertices and $E(G)$ denote its set of edges. The order of $G$ is $|V(G)|=n$ and its size is $|E(G)|=m$. For a vertex $v \in V(G)$, the open neighborhood is $N(v) = \{u \in V(G) : (u,v) \in E(G)\}$, and the closed neighborhood is $N[v] = N(v) \cup \{v\}$. A clique, or complete graph, denoted by $K_n$, is a graph in which every pair of vertices is adjacent. A complete bipartite graph is one whose vertex set can be partitioned into two disjoint sets $A$ and $B$ with $A \cup B = V(G)$ and $A \cap B = \emptyset$, such that every vertex in $A$ is adjacent to every vertex in $B$, and there are no edges within $A$ or within $B$. We denote it by $K_{a,b}$, where $a = |A|$ and $b = |B|$.

The \emph{zero forcing process} on a graph $G$, in its single-player game formulation, can be described as follows. Initially, the player selects a subset $S \subseteq V(G)$ and fills its vertices by spending one token on each (Rule 1). Subsequently, a vertex can be filled if it is the unique unfilled neighbor of some already filled vertex (Rule 2). The objective is to minimize the number of tokens required to eventually fill all vertices of the graph. A subset $S \subseteq V(G)$ is called a \emph{zero forcing set} if starting from $S$, all vertices of $G$ can be filled. The \emph{zero forcing number} of $G$, denoted by $Z(G)$, is the minimum cardinality of such a set.

The zero forcing number $Z(G)$ can be used to bound a graph-theoretic parameter called the \emph{maximum nullity}. For a graph $G$ with $n$ vertices, define $S(G)$ as the set of all $n \times n$ real symmetric matrices $B = [b_{ij}]$ such that $b_{ij} \neq 0 \Leftrightarrow \{i,j\}\in E(G)$. In other words, $G(B)$, the graph represented by $B$, is identical to $G$. Using this setup, one can define the \emph{maximum nullity} and the \emph{minimum rank} of $G$ as follows:

\vspace{-0.1cm}
\begin{align*}
   & M (G) = \max \{\text{null}(X):  X\in \mathcal{S}(G)\},\\
   & mr (G) = \min\{\text{rank}(X): X \in \mathcal{S}(G)\}.
\end{align*}
The bound provided by $Z(G)$ is that $M(G) \leq Z(G)$, or equivalently, $mr(G) \geq n-Z(G)$.

The \emph{positive semidefinite zero forcing number}, denoted by $Z_{+}(G)$, is a variant of the zero forcing number with a modified coloring rule: a vertex $v$ can be filled without spending a token if it is adjacent to a filled vertex $u$ and there is no path consisting entirely of unfilled vertices from any other unfilled neighbor of $u$ to $v$. The minimum size of a set $S \subseteq V(G)$ on which tokens must be spent to eventually fill all vertices without using additional tokens defines $Z_+(G)$. Further details, results, and related variants of zero forcing numbers and their associated parameters can be found in \cite[Chapters 2 \& 7--10]{hogben2022inverse}.

We note that the $q$-analog of zero forcing can be interpreted as a parameter that establishes a connection between the zero forcing number and the positive semidefinite forcing number. The following result shows this relationship.

\begin{proposition} \label{proposition:z+zqz}
\cite[Proposition 3.2]{butler2015using} For any graph $G$ with $n$ vertices,
$$
Z_+(G) = Z_0(G) \leq Z_1(G) \leq \dots \leq Z_n(G) = Z(G).
$$
\end{proposition}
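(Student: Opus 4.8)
The plan is to split the statement into three pieces and prove each separately: the right endpoint $Z_n(G) = Z(G)$, the interior chain $Z_q(G) \le Z_{q+1}(G)$ for $0 \le q \le n-1$, and the left endpoint $Z_0(G) = Z_+(G)$. The single principle I would exploit throughout is that \emph{lowering $q$ can only enlarge the player's repertoire of legal Rule 3 moves while granting the oracle no additional freedom}. I read a ``winning strategy with $t$ tokens'' as one that fills $V(G)$ using at most $t$ tokens against every oracle response, and I use that Rule 3 is always optional for the player. The right endpoint is immediate: for $q = n$, Rule 3 fires only when the number $k$ of components of $G[V\setminus F]$ satisfies $k > n$; but $G[V\setminus F]$ has at most $|V\setminus F| \le n$ vertices, so $k \le n$ always, Rule 3 is vacuous, and the $Z_n$ game is exactly ordinary zero forcing.

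For the interior monotonicity I would take an optimal player strategy $\sigma$ for the $Z_{q+1}$ game using $t = Z_{q+1}(G)$ tokens and argue that the same $\sigma$ wins the $Z_q$ game with $t$ tokens. Rules 1 and 2 are identical in both games, so only Rule 3 moves require attention. Whenever $\sigma$ invokes Rule 3 the current state has $k > q+1$ and $\sigma$ selects at least $q+2$ components; since $k > q+1$ implies $k > q$ and selecting at least $q+2 \ge q+1$ components is legal under $q$, the move is legal in the $Z_q$ game. The crucial observation is that the oracle's admissible replies—all nonempty subsets of the selected components—depend only on which components are selected, hence coincide in the two games. Thus every reply the $q$-oracle can make is also a reply the $(q+1)$-oracle could have made to the same selection, and since $\sigma$ succeeds against all of these it succeeds against the $q$-oracle too. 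As the moves and resulting filled sets agree step by step, $\sigma$ fills $G$ with $t$ tokens, giving $Z_q(G) \le Z_{q+1}(G)$.

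For the left endpoint $Z_0(G) = Z_+(G)$ I would match the two processes move for move. For $Z_0(G) \le Z_+(G)$: given a minimum PSD forcing set $S$, the player spends $|S| = Z_+(G)$ tokens on $S$ and realizes each PSD force as a Rule 3 move selecting only the single component $U_i$ containing the vertex to be forced. With $q = 0$ the trigger $k > 0$ holds while unfilled vertices remain, and a single-component selection leaves the oracle no choice (its only nonempty subset is $\{U_i\}$), so the player deterministically applies the forcing rule inside $G[F \cup U_i]$, which is exactly a PSD force. For the reverse inequality $Z_+(G) \le Z_0(G)$: in any successful $Z_0$ play every force performed—by Rule 2 or by Rule 3—is a valid PSD force, because being the unique unfilled neighbor of $u$ within a union of selected components implies being the unique unfilled neighbor of $u$ within its own component of $G[V\setminus F]$, which is precisely the PSD condition. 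Hence the tokens used form a PSD forcing set of the same size.

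The step I expect to be the main obstacle is this last direction: to read off a genuine PSD forcing set one must consolidate the tokens, which in a $Z_0$ play may be spent at various times and interleaved with forces, into a single initial set placed before any forcing. I would justify this by the monotonicity of PSD forcing, namely that filling extra vertices earlier only shrinks or splits the components of $G[V\setminus F]$ and therefore can only make the ``unique unfilled neighbor within its component'' condition easier to satisfy, so no previously valid force is invalidated. It is exactly this ``place all tokens first'' property that, as noted in the introduction, fails for $q \ge 1$ and makes those games genuinely adversarial; for $q = 0$ the single-component selection renders the oracle powerless, which is what collapses $Z_0$ onto $Z_+$.
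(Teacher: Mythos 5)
The paper does not prove this statement at all: it imports it verbatim as \cite[Proposition 3.2]{butler2015using}, so there is no internal proof to compare against. Your argument is correct and self-contained, and it is essentially the standard one from the cited source. The three pieces are each sound: for $q=n$ the trigger $k>n$ can never fire since $G[V\setminus F]$ has at most $n$ vertices, so the game degenerates to ordinary zero forcing; for the interior chain, a legal Rule~3 move in the $Z_{q+1}$ game (state with $k>q+1$, selection of at least $q+2$ components) is also legal in the $Z_q$ game, and the oracle's reply set --- all nonempty subsets of the selected components --- is identical in both, so an optimal $Z_{q+1}$ strategy transfers verbatim; and for $q=0$ a single-component selection strips the oracle of all choice, so each Rule~3 force is exactly a positive semidefinite force and conversely. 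You also correctly identified and discharged the one genuinely delicate point, namely consolidating interleaved token placements into an initial set in the direction $Z_+(G)\le Z_0(G)$: the monotonicity you invoke (enlarging $F$ only shrinks or splits components, so the ``unique unfilled neighbor within its component'' condition is preserved) is exactly the property that holds for $q=0$ but fails for $q\ge 1$, as the paper's introduction notes. No gaps.
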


%%%%%%%%%%%%%%%%%%%%%%%%%%%%%%%%%%%%%%%%%%%%%%%%%%%%%%%%%%%%%%%%%%%%%%%%%%%%%%%%%%%%%
\section{A computational approach to \texorpdfstring{\(Z_q\)}{Zq}}\label{sec:computational}
%%%%%%%%%%%%%%%%%%%%%%%%%%%%%%%%%%%%%%%%%%%%%%%%%%%%%%%%%%%%%%%%%%%%%%%%%%%%%%%%%%%%%

To the best of the authors' knowledge \cite{private}, determining the complexity of $Z_q$ is still an open problem. Nevertheless, several computational results have been obtained regarding the $q$-analogue variant of zero forcing. Butler et al. \cite{butler2015using} proposed an exponential-time algorithm for computing $Z_q$ for a given graph $G$ and parameter $q$, which exhaustively examines all subsets of the vertices in $G$. In this section we present a SAT-based algorithm for computing the $Z_q$-forcing number that also runs in an exponential-time. We should note that our approach differs significantly from the method proposed by Butler et al. \cite{butler2015using}, and it is inspired by the work of Brimkov et al. \cite{bmh2021}, who employed SAT modeling to enhance computational techniques for the classical zero forcing game. Thus, extending Brimkov et al. approach, we develop a SAT-based framework to compute the $Z_q$-forcing number. This method not only provides the value of the $Z_q$-forcing number but also determines optimal strategies for both the oracle and the player. Furthermore, Butler et al. \cite{butler2020properties} also introduced a polynomial-time algorithm for computing $Z_1$ for trees. Here, we extend their result by presenting a polynomial-time algorithm for cactus graphs.

%%%%%%%%%%%%%%%%%%%%%%%%%%%%%%%%%%%%%%%%%%%%%%%%%%%%%%%%%%%%%%%%%%%%%%%%%%%%%%%%%%%%%
\subsection{SAT model for computing \texorpdfstring{\(Z_q\)}{Zq}}\label{sec:SAT}
%%%%%%%%%%%%%%%%%%%%%%%%%%%%%%%%%%%%%%%%%%%%%%%%%%%%%%%%%%%%%%%%%%%%%%%%%%%%%%%%%%%%%

A strategy $ s $ for the oracle specifies, for a graph $ G $, the components chosen by the oracle when the vertices in set $ B $ are already colored, and the player’s selected components are $ C $. That is, for given sets $ B $ and $ C $, strategy $ s $ outputs a non-empty subset $ s_{B,C} \subseteq C $ (here, $C$ denotes the union of the vertices in the components, and $s_{B,C}$ denotes the union of the vertices in the chosen components). Note that if $C$ contains fewer than $q+1$ components, or if the vertices in $C$ are not the union of at least $q+1$ components, then we have $s_{B,C} = \emptyset$. We define $ S $ as the union of all possible strategies for the oracle. The number of such strategies is exponential, leading to the exponential time complexity of our algorithm. 

We define the Boolean variable $ \texttt{token}(v, i, s) $, which is true if the player spends a token on vertex $ v $, where $ v $ is the $ i $-th vertex to be colored and the oracle follows strategy $ s $. Similarly, the Boolean variable $ \texttt{forced}(v, i, s) $ is true if $ v $ is filled using Rule 2 or Rule 3, where $ v $ is the $ i $-th vertex filled, and the oracle follows strategy $ s $.

Moreover, the Boolean variable $ \texttt{feasible}(v, i, s) $ is true if, given the vertices that have already been colored up to step $ i $, there exists a subset of components such that, under oracle strategy $ s $, vertex $ v $ belongs to the oracle’s selected components and can be forced by a filled vertex; or, $v$ is the only unfilled neighbor of one of its neighbors in step $i$.

Our objective is to account for all possible strategies of the oracle and to determine a strategy for the player that minimizes the maximum number of tokens spent, regardless of the oracle's strategy. In this algorithm, every possible strategy for the oracle is considered. Assuming that the oracle follows a specific strategy $ s $, the player minimizes the tokens used. Consequently, the player determines a strategy to minimize the maximum tokens required across all oracle strategies.

Below is the SAT model for computing the $q$-analogue zero forcing number for a given number $q$ and a given graph $G$.
\begin{align}
    \texttt{minimize:} & \max_{s \in S} \sum_{v \in V}\sum_{i=0}^n \texttt{token}(v,i,s)\\
    \texttt{s.t.:} &\bigwedge_{v \in V} (\bigwedge_{s \in S}(((\bigvee_{i=0}^n \texttt{token}(v,i,s)) \wedge (\bigwedge_{i=0}^n \neg \texttt{forced}(v,i,s)) ) \vee \\
    &((\bigvee_{i=0}^n \texttt{forced}(v,i,s)) \wedge (\bigwedge_{i=0}^n \neg \texttt{token}(v,i,s)))))\\
    &\bigwedge_{v \in V}(\bigvee_{s \in S}(\bigvee_{i=0}^n((\texttt{token}(v,i,s) \vee \texttt{forced}(v,i,s))\\
    &\bigwedge_{j=0, j \neq i} ^ n (\neg \texttt{token}(v,j,s) \wedge \neg\texttt{forced}(v,j,s)))))\\
    &\bigwedge_{v \in V}(\bigwedge_{s \in S}(\bigwedge_{i=0}^n(\neg\texttt{forced}(v,i,s) \vee (\texttt{forced}(v,i,s) \wedge \texttt{feasible}(v,i,s)))))
\end{align}

The above SAT model minimizes the maximum number of tokens required across all possible strategies $s$, as specified in (1). Constraints (2) and (3) ensure that each vertex is either filled using a token or forced to be filled, but not both. Constraints (4) and (5) guarantee that every vertex is filled exactly once (in a unique step $i$). Finally, constraint (6) ensures that if a vertex is forced to be filled, it is indeed feasible to do so based on the strategy $s$ and the step at which the vertex is filled.

Next we present a SAT model for the variable $\texttt{feasible}(v,i,s)$, defined earlier.  
To improve clarity, we first introduce the auxiliary variable $\texttt{filled}(v,i,s)$, which indicates whether vertex $v$ is filled before the $i$th step under oracle strategy $s$ (either by spending a token or by being forced). Below is the SAT model for $\texttt{filled}(v, i, s)$:

\begin{align}
    \texttt{filled}(v,i,s) = \bigvee_{j=0}^{i-1} \big( \texttt{forced}(v,j,s) \vee \texttt{token}(v,j,s) \big).
\end{align}
Next, let $\beta = \{v \mid \texttt{filled}(v,i,s) = \text{true}\}$.  
We now provide the SAT model for the variable $\texttt{feasible}(v,i,s)$:

\begin{align}
    \texttt{feasible}(v,i,s) = &(\bigvee_{\substack{u \in N(v) \\ u \in \beta}} (\bigwedge_{\substack{w\in N(u) \\ u \neq v}}\texttt{filled}(w, i, s))) \vee \\
    &(\bigvee_{\substack{\mathcal{C} \in V\setminus\beta \\ v \in \mathcal{C} \\ s(\beta, \mathcal{C}) \neq \emptyset}}(\bigvee_{\substack{u\in N(v) \\ u \in \beta}} (\bigwedge_{\substack{w\in N(v) \\ w \in V \\ w \in s(\beta, \mathcal{C})}} \texttt{filled}(w,i,s) )))
\end{align}

Constraint (8) checks whether Rule~2 is applicable, i.e., whether there exists a neighbor of $v$ for which $v$ is its only unfilled neighbor.  
Constraint (9) considers all possible choices of unfilled components that contain $v$, and ensures that such a choice is feasible (by requiring $s(\beta, \mathcal{C}) \neq \emptyset$). It then verifies whether there exists a neighbor of $v$ whose only unfilled neighbor among the vertices selected by the oracle ($s(\beta, \mathcal{C})$) is precisely $v$.

%%%%%%%%%%%%%%%%%%%%%%%%%%%%%%%%%%%%%%%%%%%%%%%%%%%%%%%%%%%%%%%%%%%%%%%%%%%%%%%%%%%%%
\subsection{Polynomial algorithm for computing \texorpdfstring{\(Z_q\)}{Zq} for block graphs}\label{subsec:polyalgoblckgraphs}
%%%%%%%%%%%%%%%%%%%%%%%%%%%%%%%%%%%%%%%%%%%%%%%%%%%%%%%%%%%%%%%%%%%%%%%%%%%%%%%%%%%%%

A \textit{block graph}, is an undirected graph in which each maximal connected subgraph that cannot be disconnected by removing a single vertex forms a clique. We refer to these cliques as \textit{blocks}.

Later in Section \ref{subsec: blockgraphs} we will show that for block graphs whose block sizes are greater than two, $Z(G) = Z_q(G)$ for any value of $q$. Hence, in this section we can restrict ourselves to show an algorithm that computes $Z(G)$ for such block graphs (since the mentioned result will then provide an algorithm that computes their $Z_q(G)$ value).

In order to derive our polynomial algorithm to compute $Z$ in block graphs we need some preliminary lemmas.

Consider a block graph $G$ whose block sizes are at least three. For any block with size $\eta$ that shares only one of its vertices with other blocks, we must spend at least $\eta - 2$ tokens to completely fill this block. The next result makes this observation more precise.

\begin{lemma} \label{lemma:blockgrapheta}
Let $G$ be a block graph such that each block has at least three vertices in them. Consider a block that only share one of its vertices (such a block must exist). Let $\eta$ be the size of this block and $U$ be the set of its vertices. To be able to fill the whole graph in the zero forcing game, the player needs to spend at least $\eta - 2$ tokens on the vertices in this block.
\end{lemma}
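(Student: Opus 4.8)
The plan is to isolate the unique cut vertex of the chosen block and then argue that, inside a clique, Rule~2 is essentially powerless except for the very last vertex. First I would fix notation: let $c$ denote the single vertex of $U$ that is shared with other blocks, and set $W = U \setminus \{c\}$, so that $|W| = \eta - 1$. (The existence of such a block is obtained by taking a leaf of the block-cut tree of $G$.) The structural feature I would exploit is that every $w \in W$ is a non-cut vertex belonging to no block other than $U$, and since $U$ is a clique we have $N(w) = U \setminus \{w\}$; in particular \emph{all} neighbors of each $w \in W$ lie inside $U$.

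The heart of the argument is the following claim: any vertex $w \in W$ that is filled by Rule~2 must be the unique unfilled vertex of $U$ at the moment it is forced. To prove this I would examine the forcer $u$. Since $N(w) \subseteq U$, the forcer $u$ lies in $U$. If $u \in W$, then $N(u) = U \setminus \{u\}$, and the requirement that $w$ be the unique unfilled neighbor of $u$ forces every vertex of $U \setminus \{u,w\}$ to be filled; together with $u$ itself being filled, this leaves $w$ as the only unfilled vertex of $U$. If instead $u = c$, then $N(c) = W \cup X$, where $X$ is the set of $c$'s neighbors outside $U$; the same uniqueness requirement forces all of $W \setminus \{w\}$ (and all of $X$) to be filled, so again $w$ is the unique unfilled vertex of $U$. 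In both cases the claim holds.

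Finally I would convert this into a token count. At any step there is at most one vertex of $U$ that is unfilled while all the others are filled, and once it gets filled the block $U$ is complete; consequently at most one vertex of $W$ can ever be filled by Rule~2. Indeed, if two vertices $w_1, w_2 \in W$ were both forced, then at the time the earlier one is forced it is already the unique unfilled vertex of $U$, forcing the later one to be filled already — a contradiction. Hence at least $|W| - 1 = \eta - 2$ vertices of $W$ must be filled by Rule~1, i.e. by spending tokens, and all of these tokens are placed on vertices of $U$. The step I expect to require the most care is the uniform treatment of the forcer in the key claim: the cut vertex $c$ carries edges leaving $U$, so one must verify that its external neighbors do not open an alternative forcing route that circumvents the ``unique unfilled vertex'' conclusion — the argument above handles this by observing that those external neighbors only impose additional constraints (they, too, must be filled) and never relax the required condition.
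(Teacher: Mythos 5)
Your proof is correct and follows essentially the same route as the paper: both arguments rest on the observation that every non-shared vertex of the block has all its neighbors inside the clique, so a second unfilled vertex of $U$ blocks any application of Rule~2 to the first. The paper phrases this as a contradiction (fewer than $\eta-2$ tokens would leave two non-shared vertices to be forced, and each obstructs the other), while you phrase it directly (at most one vertex of $W$ can ever be forced); your explicit case split on whether the forcer is the cut vertex or not is a slightly more careful write-up of the same idea.
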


\begin{proof}
Let $U$ be the set of vertices in this block, with $v \in U$ as the shared vertex and $u_1, u_2, \dots, u_{\eta -1}$ as the remaining vertices. Suppose the player spends $t < \eta - 2$ tokens on this block. Then, at least two vertices in $\{u_1, u_2, \dots, u_{\eta-1}\}$ must be filled using Rule 2. Assume $u_1$ and $u_2$ are the ones that must be filled this way. However, since $u_1$ and $u_2$ share the same neighborhood, any vertex $w$ connected to one of them must also be connected to the other. As both remain unfilled, the forcing rule cannot be applied to either. Thus, the player must spend at least $\eta - 2$ tokens on $U$.
\end{proof}.

We just have shown that the player must spend at least $\eta - 2$ tokens on a block of size $\eta$ with a single shared vertex. Supose that we have a block with only one shared vertex. The next result shows that either the player fills all but one of the vertices in that block using Rule 1, or no tokens are spent on the shared vertex.

\begin{lemma} \label{lemma:unfilledshared}
    Consider a block of size $\eta$ with a single shared vertex $v$. It is not possible to fill the entire graph if exactly $\eta - 2$ vertices of this block are filled using Rule 1, with $v$ being one of them. 
\end{lemma}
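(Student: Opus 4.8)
The plan is to reduce the statement to the same mutual-deadlock phenomenon already exploited in Lemma~\ref{lemma:blockgrapheta}, but now driven by a careful token count rather than by an inequality. First I would fix notation as before: write the block as $U = \{v, u_1, \dots, u_{\eta-1}\}$ with $v$ the unique shared vertex, so that each $u_i$ is a non-shared vertex whose neighbourhood is contained entirely in the clique, namely $N(u_i) = U \setminus \{u_i\}$. This locality observation is what I would isolate at the outset, because it guarantees that no vertex outside $U$ is adjacent to any $u_i$, and hence the only vertices that can ever force a $u_i$ by Rule~2 lie inside $U$ itself.

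Next I would carry out the bookkeeping. By hypothesis exactly $\eta - 2$ vertices of the block are filled by Rule~1 and one of them is $v$; therefore only $\eta - 3$ of the $\eta - 1$ non-shared vertices receive a token, leaving exactly two of them, say $u_1$ and $u_2$, that are never filled by Rule~1. Since Rule~1 is the only alternative to forcing, both $u_1$ and $u_2$ would have to be filled by Rule~2 at some point.

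Then I would run the twin-vertex argument to show this is impossible. Any candidate forcer $w$ of $u_1$ must lie in $N(u_1) = U \setminus \{u_1\}$. If $w \neq u_2$, then $w$ is adjacent to $u_2$ as well, so while $u_2$ is unfilled the vertex $w$ has (at least) the two unfilled neighbours $u_1$ and $u_2$ and cannot force; and if $w = u_2$, then $w$ is itself unfilled and cannot force at all. Hence $u_1$ cannot be forced as long as $u_2$ is unfilled, and symmetrically $u_2$ cannot be forced while $u_1$ is unfilled. Since both begin unfilled and neither ever receives a token, neither can ever be filled, so the graph cannot be completely filled, which is the claim.

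I expect the only real subtlety, and the point worth stating explicitly, is the contrast with the successful strategy, in order to make clear why the hypothesis that $v$ is one of the filled vertices is essential. If instead all $\eta - 2$ tokens were placed on the $u_i$'s, leaving $v$ and a single $u_i$ unfilled, then once the rest of the graph forces $v$ through its other blocks, a filled non-shared vertex would have that single $u_i$ as its unique unfilled neighbour and could complete the block; spending a token on $v$ destroys this precisely because it leaves two non-shared twins, and twins sharing the same neighbourhood can never be separated by the forcing rule. The main obstacle is therefore not a hard estimate but ensuring that the count ``$\eta - 3$ tokens on non-shared vertices $\Rightarrow$ exactly two survivors'' is airtight and that the two survivors genuinely have no neighbours outside $U$, so that no external force can rescue them.
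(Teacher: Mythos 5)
Your proof is correct and follows essentially the same route as the paper's: after $v$ receives a token, exactly two non-shared vertices remain unfilled, and since they are twins whose neighbourhoods lie entirely inside the clique, neither can ever be forced while the other is unfilled. Your version is slightly more careful than the paper's (explicitly handling the case where one twin would be the forcer of the other, and noting $N(u_i)\subseteq U$), but the key idea is identical.
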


\begin{proof}
    Note that filling $\eta - 1$ vertices is sufficient to fill the entire block, and it does not matter which vertex remains unfilled using Rule 1. This is because there is always a non-shared vertex that is filled using a token, which can then force the remaining unfilled vertex to be filled. However, if exactly $\eta - 2$ vertices are filled using Rule 1 (recall that in \ref{lemma:blockgrapheta}, we showed it is not possible to fill fewer than $\eta - 2$ vertices using Rule 1 in such blocks), the shared vertex $v$ cannot be among them.  

    Assume, for contradiction, that in an optimal solution, $v$ is filled using Rule 1. Then, two other vertices in this block must be filled using Rule 2. Since $v$ is already filled, these two vertices must be non-shared vertices. Consequently, their neighborhood consists only of the vertices in the block. However, all filled vertices in this block are neighbors of both, meaning none of them can force either of these two vertices to be filled, leading to a contradiction.
\end{proof}

\begin{theorem} \label{Theorem: Z algorithm block graph}
Let $G$ be a block graph in which each block has size at least three. There exists a polynomial-time algorithm to compute $Z(G)$.
\end{theorem}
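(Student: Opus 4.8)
The plan is to exhibit an explicit recursive \textbf{peeling} algorithm that processes the blocks of $G$ from the outside in, and to prove that it returns the exact value $Z(G) = n - b$, where $b$ is the number of blocks of $G$. Since the blocks (biconnected components) of a graph, together with its block-cut tree, can be computed in linear time, this immediately gives the claimed polynomial-time algorithm. The algorithm itself is simple: while $G$ has more than one block, pick a block $B$ that shares only one vertex $v$ with the rest of $G$ (a leaf of the block-cut tree; such a block exists as noted before Lemma~\ref{lemma:blockgrapheta}), charge $\eta_B - 2$ tokens to $B$ placed on non-shared vertices as in Lemma~\ref{lemma:unfilledshared}, delete the $\eta_B - 1$ non-shared vertices of $B$, and recurse; when a single block $K_\eta$ remains, charge $\eta - 1$. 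The output is the total charge.

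For the upper bound I would show the charge is achievable, by a top-down cascade on the block-cut tree rooted at the final block $B_0$. First fill $B_0$ with $\eta_0 - 1$ tokens so that its last vertex is forced and all of $B_0$, hence all of its cut vertices, becomes filled. Then, whenever a block $B$ is reached whose parent cut vertex $v$ is already filled, spending $\eta_B - 2$ tokens on the remaining vertices leaves exactly one unfilled vertex, which---because $B$ is a clique and $v$ is filled---is immediately forced; this fills $B$ together with all of its child cut vertices, so the cascade propagates to the whole (connected) block-cut tree. Summing the charges telescopes to $(\eta_0-1)+\sum_{B\ne B_0}(\eta_B-2) = \sum_B(\eta_B-2)+1 = n-b$.

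For the lower bound the key structural fact is that a clique can internally force at most one of its own vertices: if $w\in B$ forces a neighbor $v\in B$, then $v$ must be the unique unfilled neighbor of $w$, and since $w$ is adjacent to all of $B\setminus\{w\}$, every other vertex of $B$ is already filled, so $v$ is the last vertex of $B$ to be filled. Hence, in any successful forcing process each block accounts for at most one such internal force, and since every forced vertex is forced across a unique edge lying in a unique block, the total number of forced vertices is at most $b$. As the number of forced vertices equals $n-Z(G)$, this yields $Z(G)\ge n-b$, matching the upper bound. This global count is really a packaged form of Lemma~\ref{lemma:blockgrapheta}; alternatively one can prove the bound directly by induction, peeling one leaf block at a time.

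I expect the main obstacle to be the inductive exchange step if one follows the peeling route for the lower bound: showing that \emph{some} optimal zero forcing set may be assumed to place exactly $\eta_B-2$ tokens, all on non-shared vertices, in a chosen leaf block, so that deleting that block's private vertices leaves a valid forcing process for the reduced graph with exactly $\eta_B-2$ fewer tokens. This combines Lemma~\ref{lemma:blockgrapheta} (no fewer tokens are possible), Lemma~\ref{lemma:unfilledshared} (the shared vertex is then filled from outside rather than by a token), and a careful verification that restricting the forcing timeline to the reduced graph remains legal. The clique-counting argument above sidesteps most of this bookkeeping, so in practice I would lead with it for correctness and use the peeling mainly to recover an explicit optimal strategy.
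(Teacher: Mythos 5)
Your proposal is correct, and it reaches a stronger conclusion than the paper states explicitly: the closed-form $Z(G)=n-b$, where $b$ is the number of blocks, which reduces the ``algorithm'' to running a linear-time block decomposition and counting. The upper-bound half is essentially the paper's construction (peel leaf blocks, spend $\eta_B-2$ tokens per non-root block, $\eta_0-1$ on the root), but your lower bound is genuinely different: the paper proves optimality by an inductive exchange argument on the number of blocks, leaning on Lemmas~\ref{lemma:blockgrapheta} and~\ref{lemma:unfilledshared} to argue that an optimal set may be assumed to place exactly $\eta-2$ tokens in a leaf block before deleting it, whereas you observe globally that each force $w\to v$ along an edge of a block $B$ requires all of $B\setminus\{v\}$ to be filled already, so each block hosts at most one force, hence at most $b$ vertices are ever forced and $Z(G)\ge n-b$. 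This counting argument is shorter, avoids the delicate ``restrict the optimal strategy to the reduced graph'' bookkeeping that you correctly identify as the obstacle in the peeling route, and isolates exactly where the hypothesis enters (blocks are cliques for the lower bound; $\eta_B\ge 3$ is needed only for tightness, as trees show). What the paper's inductive version buys in exchange is a template that carries over to the $Z_q$ argument of Theorem~\ref{block_graphs_gen}, where order of moves matters and a global count would not suffice.

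One small imprecision: in the cascade you say the leftover vertex of a block is ``immediately forced'' once the parent cut vertex is filled and $\eta_B-2$ tokens are placed. If every tokened vertex of $B$ is itself a cut vertex with unfilled descendants, no force is available at that moment; the force only fires after those descendant subtrees are completed. Since in ordinary zero forcing all tokens may be placed up front and the derived closure is independent of the order of forces, this is harmless, but the bottom-up induction verifying that every designated leftover vertex is eventually forced (using that $\eta_B\ge 3$ guarantees at least one tokened, hence eventually force-capable, vertex in each block) should be spelled out.
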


\begin{proof}
    
Our polynomial algorithm for block graphs is as follows. In each step, we consider a block graph whose blocks have size at least three, with some vertices already filled. We start with the original graph $G$ with no filled vertices. At each step, we identify a block that shares exactly one vertex, denoted by $v$, with other blocks. Suppose this block has $\eta$ vertices. There are two cases to consider:

\begin{itemize}
    \item If at least $\eta - 1$ of its vertices are already filled, we assume the entire block is filled. In this case, we remove this block (except for vertex $v$) and continue the process with the resulting smaller graph, treating $v$ as a filled vertex.  

    \item If fewer than $\eta - 1$ vertices are filled, we spend the necessary tokens to ensure that at most one vertex other than $v$ remains unfilled. If, after spending these tokens, only one vertex remains unfilled, we proceed as described above. Otherwise, by Lemma \ref{lemma:unfilledshared}, we know that $v$ and exactly one other vertex remain unfilled. Since filling either of them allows the other to be filled by the forcing rule, we remove the block (except for $v$), continue with the smaller graph, and treat $v$ as an unfilled vertex. This way, when $v$ is later filled in the reduced graph, we assume the remaining unfilled vertex in the removed block is filled via Rule 2.      
\end{itemize}

At each step, the graph is reduced by one block. We arbitrarily select a block that shares only one vertex with other blocks, spend tokens to reach the required state, and remove the block (except for the shared vertex). This process continues until the graph consists of a single vertex (which is filled by spending one token) or becomes empty.

Each round of this process requires at most $O(n^2)$ time, as we need to identify a block with a single shared vertex and check its vertices to determine which are filled and unfilled. Since at least two vertices are removed at the end of each round (as the removed block contains more than two vertices, excluding the shared vertex), the total number of rounds is $O(n)$. Thus, the entire process runs in $O(n^3)$ time, ensuring a polynomial time approach. We note that the complexity here is obtained using a naive analysis; in Appendix~\ref{apx: faster zq block graphs}, we have further optimized this complexity to $O(n)$ by efficiently selecting the block with only one shared vertex. However, our primary focus here was to demonstrate the existence of a polynomial time algorithm for computing $Z(G)$ for block graphs.
\end{proof}

\begin{proposition}
    The described algorithm uses an optimal number of tokens to fill the whole block graph.
\end{proposition}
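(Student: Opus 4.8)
The plan is to prove optimality by induction on the number of blocks, showing that the algorithm's token count matches a lower bound established by repeatedly applying Lemma \ref{lemma:blockgrapheta}. First I would set up the inductive framework: at each round the algorithm peels off a block $U$ of size $\eta$ sharing exactly one vertex $v$ with the rest of the graph, and reduces to a strictly smaller block graph $G'$ (with all blocks still of size at least three) in which $v$ is marked either filled or unfilled. The key claim to carry through the induction is that the number of tokens the algorithm spends on $U$, plus the optimal number needed on $G'$ under the appropriate boundary condition on $v$, equals the overall optimum for $G$.

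The heart of the argument is a matching lower bound. Here I would invoke Lemma \ref{lemma:blockgrapheta} to argue that any zero forcing strategy must spend at least $\eta-2$ tokens on the peeled block $U$, and that these tokens are essentially ``local'' to $U$ in the sense that tokens spent inside $U$ cannot help force vertices outside $U$ except through the shared vertex $v$. The decomposition then lets me add the per-block lower bounds: the total tokens in any valid strategy is at least the sum over blocks of their individual requirements, minus careful accounting at the shared vertices. I would then verify that the algorithm meets this bound with equality, treating the two cases of the algorithm separately. In the first case (at least $\eta-1$ vertices already filled) no new tokens are spent and $v$ propagates as a filled vertex, consistent with the optimum; in the second case the algorithm spends exactly enough tokens to leave at most one non-shared vertex unfilled, and Lemma \ref{lemma:unfilledshared} guarantees that this is compatible with an optimal solution, since it rules out the wasteful configuration where $v$ is filled but two non-shared vertices are forced.

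The subtle point I expect to be the main obstacle is showing that the greedy, local decisions do not conflict globally, specifically that deferring the filling of the shared vertex $v$ (treating it as unfilled and letting it be forced later from the reduced graph) never costs extra tokens. One must argue that whatever forces $v$ in the reduced graph $G'$ would also have forced $v$ in $G$, and conversely that the single leftover vertex in the removed block is correctly filled by Rule 2 once $v$ is filled. I would make this precise by showing the reduction preserves the forcing dynamics: a fill sequence for $G'$ lifts to a fill sequence for $G$ with the same token set augmented by the $\eta-2$ local tokens, and vice versa. The induction hypothesis applied to $G'$ then closes the argument.

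Finally, I would confirm the base case, where the graph has been reduced to a single vertex or becomes empty: in the former the algorithm spends one token and this is clearly optimal, and in the latter no tokens remain to be spent. Combining the base case with the inductive step, and using that the sum of the local lower bounds is achieved with equality at every round, yields that the algorithm uses exactly $Z(G)$ tokens, which is the desired optimality.
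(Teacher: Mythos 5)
Your proposal follows essentially the same route as the paper: induction on the number of blocks, peeling off a leaf block with a single shared vertex $v$, using Lemma~\ref{lemma:blockgrapheta} for the local lower bound of $\eta-2$ tokens and Lemma~\ref{lemma:unfilledshared} to control which vertex may remain unfilled, and then applying the induction hypothesis to the reduced graph $G'$. The paper casts the inductive step as a proof by contradiction rather than as matching upper and lower bounds, but the substance is identical, and your explicit attention to lifting fill sequences between $G$ and $G'$ merely makes precise a step the paper treats informally.
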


\begin{proof}
    We prove this result by induction. According to our algorithm, to fill an empty graph, we spend no tokens; to fill a single vertex, we spend one token; and to fill a graph consisting of a single block with $n$ vertices, we spend $n - 1$ tokens, which is the optimal number of tokens required.

    Now, assume the result holds for block graphs with $b-1$ blocks. We aim to prove that it also holds for block graphs with $b$ blocks.

    Consider an optimal solution, denoted as $S$. By a solution, we mean a subset of vertices that must be filled using Rule 1 to eventually force all other vertices and fill the entire graph. Now, consider a block $B$ of size $\eta$ that has only one shared vertex, $v$. By Lemma \ref{lemma:blockgrapheta}, we know that in both the optimal solution and our algorithm, at least $\eta - 2$ vertices and at most $\eta - 1$ vertices in $B$ are filled using Rule 1. If in the optimal solution, all vertices in $B$ were filled by spending tokens, we reach a contradiction since, after $\eta - 1$ tokens are spent on this block, one arbitrary vertex could have been filled by the forcing rule, reducing the total number of spent tokens.

    Now, assume by contradiction that the optimal solution uses fewer tokens than our algorithm. If in the optimal solution $\eta - 1$ vertices are filled, we remove the block $B$ except for vertex $v$ and consider $v$ as a filled vertex in the new graph. Otherwise, we remove $B$ and treat $v$ as an unfilled vertex. In both cases, we have removed $\eta - 2$ filled vertices. The remaining graph, $G - B$, consists of $b-1$ blocks can be filled using $|S| - (\eta - 2)$ tokens by filling the vertices in $S \setminus (B \setminus \{v\}) $.

    In our algorithm, we spend $\eta - 2$ tokens on block $B$ and then solve the problem for the graph without $B$ (which now has $b-1$ blocks). If our algorithm spends $t$ tokens to fill the entire graph $G$, it follows that it spends $t - (\eta - 2)$ tokens to fill $G - B$.

    By the induction hypothesis, our solution for $b-1$ blocks is optimal. However, if our algorithm for the graph $G$ with $b$ blocks is not optimal, then $|S| < t$, which implies $|S| - (\eta - 2) < t - (\eta - 2)$. This contradicts the assumption that our algorithm provides an optimal solution for $b-1$ blocks.

Therefore, we have proved that our algorithm uses the optimal number of tokens.
\end{proof}

Appendix~\ref{apx: faster zq block graphs} contains the pseudocode for the above algorithm for block graphs, and the corresponding implementation in Python can be found in this \href{https://github.com/sanazmgh/Zq-Forcing-Number-Algorithms}{GitHub repository}.

%%%%%%%%%%%%%%%%%%%%%%%%%%%%%%%%%%%%%%%%%%%%%%%%%%%%%%%%%%%%%%%%%%%%%%%%%%%%%%%%%%%%%
\subsection{Polynomial algorithm for computing \texorpdfstring{\(Z_0\)}{Z0} for cactus graphs}\label{sec:polyalgocactusgraphs}
%%%%%%%%%%%%%%%%%%%%%%%%%%%%%%%%%%%%%%%%%%%%%%%%%%%%%%%%%%%%%%%%%%%%%%%%%%%%%%%%%%%%%

A \textit{cactus graph} is an undirected graph in which each edge belongs to at most one cycle. Equivalently, any two cycles in the graph share at most one vertex. By definition, a cactus graph can be viewed as a tree where each vertex represents either a cycle or a singular vertex not included in any cycle. By \textit{pendant cycle} we mean a cycle that shares exactly one vertex with another cycle or a vertex that is not part of any cycle and has exactly one neighbor. Notably, by the definition of a cactus graph, such a cycle or vertex always exists.

When $q=0$, the oracle has no strategic choices since it must always announce the only component it receives from the player. From Proposition \ref{proposition:z+zqz}, we know that $Z_0 = Z_+$. Unlike in the $q$-analogue of the zero forcing number (for $q > 0$), here we can first apply Rule 1 and then proceed with the other rules. Now, consider a cycle. All vertices in the cycle can be filled using Rule 3 as soon as two of its vertices are filled. Lemma \ref{lemma:cycle} formalizes this observation.

\begin{lemma} \label{lemma:cycle}
    Let $C$ be an arbitrary cycle. Then, $Z_0(C) = 2$.
\end{lemma}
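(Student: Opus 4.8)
The plan is to prove that $Z_0(C) = 2$ for an arbitrary cycle $C$ by establishing the two inequalities separately. First I would show that $Z_0(C) \geq 2$: a single filled vertex cannot force anything, because under the positive semidefinite rule (equivalently, the $Z_0$ rule) a filled vertex $u$ can force a neighbor only when the set of unfilled vertices breaks into components such that $u$ has a unique unfilled neighbor within the selected component. With just one token placed on a vertex $v$ of $C_n$ (for $n \geq 3$), the graph $G[V \setminus \{v\}]$ is a single path, hence one connected component, and $v$ has two unfilled neighbors in it, so neither Rule 2 nor Rule 3 applies and no progress is possible. Thus at least two tokens are needed.

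For the upper bound $Z_0(C) \leq 2$, I would place two tokens on any two vertices of the cycle and then argue that Rule 3 (with $q=0$) propagates to fill the entire cycle. Here the key observation, already flagged in the surrounding text, is that when $q=0$ the oracle is forced to return whatever single component the player presents, so the player effectively controls the forcing. After filling two vertices $a$ and $b$, the unfilled vertices of the cycle split into (at most) two paths/arcs between $a$ and $b$. The player can select these components one at a time; within a single arc, the endpoint filled vertex ($a$ or $b$) has a unique unfilled neighbor inside that arc, so Rule 2/Rule 3 fires and colors that neighbor, and iterating down the arc fills it completely. Doing this for each of the two arcs fills all of $C$.

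The main subtlety — and the step I would treat most carefully — is matching the precise mechanics of Rule 3 for $q=0$ to this propagation argument, in particular verifying that the player is always allowed to present a valid collection of components. Since Rule 3 requires $k > q = 0$, i.e. at least one unfilled component exists, the player may invoke it whenever the graph is not yet fully filled, and must select at least $q+1 = 1$ component; presenting a single arc suffices and the oracle must accept it. I would also note the degenerate case where the two chosen vertices are adjacent, so that $V \setminus \{a,b\}$ is a single arc, which is handled identically. Because each arc is an induced path attached to a filled endpoint, the uniqueness-of-unfilled-neighbor condition holds at each step, so the forcing cascade is justified. Combining $Z_0(C) \geq 2$ and $Z_0(C) \leq 2$ yields $Z_0(C) = 2$.
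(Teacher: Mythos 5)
Your proposal is correct and follows essentially the same argument as the paper: two tokens split the unfilled vertices into at most two arcs, each of which the player hands to the (choiceless, since $q=0$) oracle and forces down from its filled endpoint, while a single token leaves one unfilled component in which the filled vertex has two unfilled neighbors, so no rule applies. The extra care you take with the degenerate adjacent-tokens case and the $k>q=0$ condition is consistent with, and slightly more explicit than, the paper's version.
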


\begin{proof}
    First, we show that a cycle $C$ can be entirely filled by spending two tokens. Consider two arbitrary vertices $u$ and $v$ in $C$, and spend a token on each of them. This results in one or two disjoint unfilled paths, where $u$ and $v$ are adjacent to the endpoints of these paths. By applying Rule 3 and announcing one of the unfilled paths to the oracle, the player can force the unfilled vertex at one end of that path to be filled. Repeating this process iteratively ensures that all vertices in the cycle are eventually filled.

    However, the player cannot fill the entire cycle by spending only one token. If a single vertex is initially filled, it will be adjacent to both endpoints of the remaining unfilled path. Since this unfilled path forms a single component, neither Rule 2 nor Rule 3 can be applied to force additional vertices to be filled. Hence, at least two tokens are necessary.
\end{proof}

Using the same argument, we can derive the following result.

\begin{corollary}
   Let $C$ be a cycle that is an induced subgraph of a cactus graph $G$. In the $q$-analogue of the zero forcing game with $q = 0$, as soon as any two vertices in $C$ are filled (by any rule), the entire cycle $C$ can be filled.
\end{corollary}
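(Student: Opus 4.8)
The plan is to adapt the argument of Lemma~\ref{lemma:cycle} to the embedded setting, the only new feature being that the cycle $C$ now carries branches (subtrees or other cycles) hanging off its vertices through cut vertices of the cactus $G$. First I would record the decisive consequence of taking $q=0$: in Rule~3 the player may select a \emph{single} connected component of $G[V\setminus F]$, and since the oracle must return a nonempty subset of the selected components, it is forced to return exactly that one. Hence with $q=0$ the player can, at will, isolate any one component $\mathcal{C}$ of $G[V\setminus F]$ and apply the forcing rule inside $F\cup\mathcal{C}$ with no interference from the oracle. The whole corollary then reduces to the following local claim: whenever a vertex of $C$ is filled and its neighbor along $C$ is unfilled, the player can force that neighbor.

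Next I would set up the geometry. Let $u,v\in C$ be the two filled vertices; deleting them from $C$ leaves one arc (if $u,v$ are adjacent) or two disjoint arcs $P_1,P_2$, exactly as in the proof of Lemma~\ref{lemma:cycle}. The first key step is to show that $P_1$ and $P_2$ lie in \emph{distinct} components of $G[V\setminus F]$. I would argue by contradiction: an unfilled path joining a vertex of $P_1$ to a vertex of $P_2$ and avoiding $u,v$ would, together with the arc of $C$ between its endpoints, form a second cycle $C'$ sharing a subpath of $C$ of length at least one, hence sharing at least two vertices with $C$; this violates the defining cactus property that any two cycles meet in at most one vertex. So the two arcs sit in different components, and in particular the two cycle-neighbors $w_1$ of $u$ (on $P_1$) and $x_1$ of $u$ (on $P_2$) never belong to the same chosen component.

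The main work, and the step I expect to be the genuine obstacle, is verifying the \emph{unique unfilled neighbor} condition inside the isolated subgraph at each forcing step. Suppose $w_{i-1}$ is filled and I select the component $\mathcal{C}$ containing the next arc vertex $w_i$. I must show $w_i$ is the only unfilled neighbor of $w_{i-1}$ within $F\cup\mathcal{C}$. The neighbors of $w_{i-1}$ in $G$ are: its arc-predecessor (already filled), its arc-successor $w_i$, and possibly the entry vertex of a branch $T_{i-1}$ attached at the cut vertex $w_{i-1}$. Here I would invoke the block--cut-tree structure of the cactus: any vertex of $T_{i-1}$ can reach $w_i$ only by passing through $w_{i-1}$, which is filled, so $T_{i-1}$ lies in a different component and its vertices are excluded from $F\cup\mathcal{C}$; likewise the opposite-direction cycle neighbor lies in the other arc's component by the previous step. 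Thus $w_{i-1}$ has $w_i$ as its unique unfilled neighbor in $F\cup\mathcal{C}$, and Rule~3 fills $w_i$. Starting the iteration from $u$ (forcing $w_1$) and from $v$ (forcing the opposite end), and repeating along each arc, fills every vertex of $C$. Additional filled vertices elsewhere in $G$ only refine the components and never merge the two arcs, so they do not affect the argument. I would close by noting that the single-arc case is identical and even simpler, completing the proof.
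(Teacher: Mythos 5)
Your proof is correct and takes essentially the same route as the paper, which gives no separate argument and simply asserts the corollary ``using the same argument'' as Lemma~\ref{lemma:cycle}: with $q=0$ the oracle must return the single announced component, and the player forces along each unfilled arc from its filled end. Your additional verifications---that the two arcs lie in distinct components of $G[V\setminus F]$ by the cactus property, and that branches attached at an arc vertex are separated from the rest of the arc once that (filled) cut vertex is removed---are precisely the details the paper leaves implicit.
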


Consider a corresponding rooted tree representation of the cactus graph and a subtree of it. Assume that the root of this subtree is a cycle, which we denote by $C$ (we will later consider the case where the root is a single vertex rather than a cycle). To fill the entire cycle $C$, the player must reach a point where at least two vertices of $C$ are filled. 

If any cycle that shares a vertex with $C$ gets filled first, then at least one vertex of $C$ is also filled without spending a token, as they share a vertex. Similarly, if any adjacent vertex of a vertex in $C$ gets filled first, then this vertex in $C$ can be filled using Rule 3. Therefore, once an adjacent vertex of a vertex in $C$ is filled, at least one vertex of $C$ can also be filled without spending a token. The following lemma formalizes this observation.

\begin{lemma}
    Let $G$ be a cactus graph, and let $C$ be an induced subgraph of $G$ that forms a cycle. Suppose there exists a filled vertex $u$ outside of $C$ that is adjacent to a vertex $v \in C$. Then, by using Rule 3 on the unfilled component consisting of $C$ and the filled vertex $u$, the player can fill the vertex $v$.
\end{lemma}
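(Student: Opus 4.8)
The plan is to exploit the simplification that $q=0$ brings to Rule~3. When the player announces a single unfilled component, the oracle has no freedom and must return that very component (the only nonempty subset of a one-element family), so the move amounts to applying the ordinary forcing rule (Rule~2) inside the subgraph induced by $F\cup U$, where $F$ is the current set of filled vertices and $U$ is the announced component. Accordingly, I would have the player announce the unfilled component $U$ containing $v$, namely the one made up of the still-unfilled vertices of $C$. The task then reduces to showing that, inside $F\cup U$, the filled vertex $u$ has $v$ as its \emph{unique} unfilled neighbour, since Rule~2 would then immediately force $v$.

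The key structural step is the claim that $v$ is the only neighbour of $u$ lying on $C$. I would prove this by contradiction: suppose $u$ were adjacent to two distinct vertices $v,v'\in V(C)$. Choosing an arc $P$ of $C$ between $v$ and $v'$, the two edges $uv,uv'$ together with $P$ form a cycle $D$; since $u\notin C$ we have $u\in D$ but $u\notin C$, so $D\neq C$, while $D$ and $C$ share every vertex of $P$, in particular both $v$ and $v'$. This contradicts the defining property of a cactus that any two distinct cycles meet in at most one vertex. Hence $N(u)\cap V(C)=\{v\}$.

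Granting this, the conclusion is immediate: in the subgraph induced by $F\cup U$ the vertex $u$ is filled, its filled neighbours lie in $F$, and its only possible unfilled neighbours lie in $U$; since $U$ is the unfilled part of $C$ and $u$ meets $C$ only at $v$, the vertex $v$ is the unique unfilled neighbour of $u$ in $F\cup U$. Therefore the forcing rule, applied legitimately via Rule~3 in the $q=0$ game, lets $u$ force $v$.

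The step I expect to require the most care is ensuring that $v$ is the unique unfilled neighbour of $u$ \emph{inside the announced component}, not merely on $C$ itself. Reading ``the component consisting of $C$'' as the set of unfilled vertices of $C$, this follows at once from $N(u)\cap V(C)=\{v\}$; the only way it could fail is if $u$ had a second neighbour $w\notin C$ sitting in the same unfilled component as $v$. I would rule this out using that, because $u$ lies outside $C$, the vertex $v$ is a cut vertex of $G$ separating $u$ from $C\setminus\{v\}$, so any route from such a $w$ back to the unfilled portion of $C$ must pass through $v$; in the bottom-up filling of the rooted cactus the block through which $w$ would reach $v$ is processed before $C$, so $v$ (and hence that entire block) would already be filled. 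Making this cut-vertex/processing-order point explicit is the one nontrivial ingredient, after which the forcing conclusion follows directly.
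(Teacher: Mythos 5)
Your proof is correct and follows essentially the same route as the paper's: announce the single unfilled component containing $C$ (at $q=0$ the oracle must return it) and use the cactus property --- two distinct cycles share at most one vertex --- to conclude that $u$ has exactly one unfilled neighbour, namely $v$, in that component, so the forcing rule applies. Your closing discussion of a possible second neighbour $w\notin C$ makes explicit a case that the paper's proof folds into the same one-line cactus assertion; in the setting where the lemma is applied the announced component is exactly the unfilled part of $C$, so that case does not arise.
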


\begin{proof}
    Consider the component that contains $C$. The vertex $u$ cannot have two adjacent vertices in this component because that would imply the existence of two cycles sharing more than one vertex, contradicting the definition of a cactus graph. Therefore, by announcing the component containing $C$ and noting that $u$ has exactly one neighbor in this component, Rule 3 allows $u$ to force $v$ to be filled.
\end{proof}

Therefore, there are three cases to consider for filling the entire cycle $C$, as outlined below.

\begin{itemize}
    \item Two vertices of $C$ are filled by spending two tokens.
    
    \item Either an adjacent vertex of $C$ or a cycle sharing a vertex with $C$ is filled before $C$, causing a vertex of $C$ to be filled without spending a token. Another vertex of $C$ is then filled by spending a token. The adjacent vertex or the cycle can be either a parent or a child of $C$ in the tree.
    
    \item Any of the following situations occur before $C$ is filled:  
    \begin{itemize}
        \item Two adjacent vertices of $C$ are filled.  
        \item Two cycles that share a vertex with $C$ are filled.  
        \item An adjacent vertex to $C$ and a cycle that shares a vertex with $C$ are filled.  
    \end{itemize}  
    In these cases, two vertices of $C$ are filled without spending any tokens. The adjacent vertices or cycles can either be the parent and a child of $C$ or two of its children in the tree.
\end{itemize}

Now consider the case where the root of the subgraph is a single vertex. In this case, we only need to determine whether any of its children or its parent is filled before this vertex or if a token is spent on it. The next lemma formalizes this observation.

\begin{lemma}
    In a cactus graph $G$, consider a vertex $v$ that is not part of any cycle. If an adjacent vertex of $v$, such as $u$, is filled before $v$, then by applying Rule 3 on the filled vertex $u$ and the unfilled component containing $v$, the vertex $v$ can be forced to be filled.
\end{lemma}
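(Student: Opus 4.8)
The plan is to mirror the argument used in the preceding lemma for cycles, but to exploit the fact that $v$ lies in no cycle to pin down the local structure around $u$. Since we are in the $q=0$ setting, Rule~3 is particularly simple: because $v$ is unfilled there is at least one unfilled component, so the player may select the single unfilled component $U$ containing $v$ (this is allowed since $q+1=1$), and the oracle, having only the one-element family $\{U\}$ available, is forced to return $U$. Thus the player gets to apply the ordinary forcing rule on the induced subgraph $F\cup U$, where $F$ denotes the currently filled set (which contains $u$).

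The heart of the argument is to show that, within $F\cup U$, the filled vertex $u$ has $v$ as its \emph{unique} unfilled neighbor; equivalently, that $u$ has exactly one neighbor in the component $U$. I would argue this by contradiction: suppose $u$ had two distinct neighbors $v,w\in U$. Since $U$ is connected and $u\notin U$ (as $u$ is filled while $U$ consists of unfilled vertices), there is a simple path $P$ from $v$ to $w$ lying entirely inside $U$, and $P$ avoids $u$. Concatenating the edges $uv$ and $uw$ with $P$ then yields a cycle passing through $v$, contradicting the hypothesis that $v$ belongs to no cycle. Hence $u$ has exactly one neighbor in $U$, namely $v$.

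Given this, the conclusion is immediate. Every neighbor of $u$ outside $U$ lies in $F$ and is therefore filled, while its only neighbor inside $U$ is $v$; so $v$ is the unique unfilled neighbor of $u$ in the subgraph $F\cup U$ returned by the oracle. Applying the forcing rule there fills $v$, as claimed.

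I expect the only delicate point---more a bookkeeping detail than a genuine obstacle---to be the justification that the cycle built in the contradiction step is legitimate, i.e.\ that the path $P$ is internally disjoint from $u$; this holds simply because $P\subseteq U$ whereas $u$ is filled and hence not in $U$. It is worth noting that this argument uses only that $v$ lies in no cycle (the full cactus property is not needed here), which aligns it cleanly with the cycle case treated in the previous lemma.
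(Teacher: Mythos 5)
Your proof is correct and complete. The paper actually states this lemma without any proof, but your argument is precisely the natural analogue of the paper's proof of the preceding lemma (the cycle case): you show that $u$ has a unique neighbor in the unfilled component containing $v$ --- here because a second neighbor $w$ would, via a path inside the connected component avoiding $u$, close a cycle through $v$ --- and then Rule~3 applies; the handling of the $q=0$ oracle and the disjointness of the path from $u$ are both handled correctly.
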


Therefore, there are two cases to consider when the root of the subgraph is a vertex $v$, as outlined below.

\begin{itemize}
    \item A token is spent on $v$ to fill it.
    \item A neighbor of $v$ is filled before $v$, forcing $v$ to be filled by Rule 3. This neighbor can be either a parent in the tree or a child.
\end{itemize}

By using the previous explanations and considering all the discussed cases for each vertex in the tree, we can compute the minimum number of tokens required.

Consider a rooted tree representation of the cactus graph. Our goal is to compute the minimum number of tokens needed to fill all vertices in each subtree rooted at a vertex in the tree. This can be achieved by performing the calculation from the leaves up to the root. The calculation for the leaves is straightforward, as we can directly apply the cases discussed.

Now, consider a vertex $w$. We already know the minimum number of tokens required to fill the subtree rooted at any of its children in each of the discussed cases. Using these values, we can compute the minimum number of tokens needed to fill the subtree rooted at $w$ in any scenario.

To determine the desired value, we consider all possible rooted trees of the cactus graph by selecting each vertex of $G$ as the root. By performing the following calculations for all trees and selecting the minimum value among them, we obtain $Z_0(G)$. Below, we outline how to compute the minimum number of tokens needed to fill a rooted tree of a given graph.

In the following, a \textit{free filled vertex} refers to a vertex that is forced to be filled by another vertex, rather than by spending a token on it. Next we present a recurrence relation for computing $Z(G)$, where $G$ is a block graph.\\

For $ i \in \{0,1\}, j \in \{0,1,2\} $, we define:
\begin{multline*}
   Z_0(C, i, j) = \text{the number of tokens needed to fill the subtree rooted at } \\ C \text{ if it gets } i \text{ free filled vertex from the parent and } j \text{ free vertices from the children.} 
\end{multline*}

\allowdisplaybreaks

Let $ C $ be a cycle. We have:
\begin{align*}
 Z_0(C,0,0) &= 2 + \sum_{S \in \text{Children of } C} \min_{k \in \{0,1,2\}} Z_0(S, 1 , k), \\\\
 Z_0(C,0,1) &= 1 + \min_{H \in \text{Children of } C}  \sum_{\substack{S \in \text{Children of } C \\ S \neq H}} \min_{k' \in \{0,1,2\}} Z_0(S, 1 , k') + \min_{k \in \{0,1,2\}} Z_0(H, 0 , k), \\\\
 Z_0(C,0,2) &= \min_{\substack{H , Q \in \text{Children of } C \\ H \neq Q}}\sum_{\substack{S \in \text{Children of } C \\ S \neq H , Q}} \min_{k'' \in \{0,1,2\}} Z_0(S, 1 , k'') + \\
 &\min_{k' \in \{0,1,2\}} Z_0(H, 0 , k') + \min_{k \in \{0,1,2\}} Z_0(Q, 0 , k), \\\\
 Z_0(C,1,0) &= 1 + \sum_{S \in \text{Children of } C} \min_{k \in \{0,1,2\}} Z_0(S, 1 , k), \\\\
 Z_0(C,1,1) &= 1 + \min_{H \in \text{Children of } C} \sum_{\substack{S \in \text{Children of } C \\ S \neq H}} \min_{k' \in \{0,1,2\}} Z_0(S, 1 , k') + \min_{k \in \{0,1,2\}} Z_0(H, 0 , k).
 \end{align*}

If $ C $ is a singular vertex, we have:
\begin{align*}
 Z_0(C,0,0) &= 1 + \sum_{S \in \text{Children of } C} \min_{k \in \{0,1,2\}} Z_0(S, 1 , k), \\\\
 Z_0(C,0,1) &= \min_{H \in \text{Children of } C}  \sum_{\substack{S \in \text{Children of } C \\ S \neq H}} \min_{k' \in \{0,1,2\}} Z_0(S, 1 , k') + \min_{k \in \{0,1,2\}} Z_0(H, 0 , k), \\\\
 Z_0(C,1,0) &= \sum_{S \in \text{Children of } C} \min_{k \in \{0,1,2\}} Z_0(S, 1 , k).
 \end{align*}
 \medskip

By considering each cycle or singular vertex as the root of a tree, computing these values for all such trees, and taking the minimum of $Z_0(C, 0, k)$ for $k \in \{0,1,2\}$ with $C$ as the root of the tree, we obtain the desired value $Z_0(G)$.
In Appendix \ref{apx: code cactus}, we provide the pseudocode for calculating $Z_0$ for cactus graphs. This is an implementation of the approach described above, and it has a time complexity of $O(n^2)$. Also, the Python code implementing this algorithm for cactus graphs can be found in this \href{https://github.com/sanazmgh/Zq-Forcing-Number-Algorithms}{GitHub repository}.

%%%%%%%%%%%%%%%%%%%%%%%%%%%%%%%%%%%%%%%%%%%%%%%%%%%%%%%%%%%%%%%%%%%%%%%%%%%%%%%%%%%%%
\section{Exact \texorpdfstring{\(Z_q\)}{Zq}-forcing number for some graph classes}\label{sec:exactvaluegraphclasses}
%%%%%%%%%%%%%%%%%%%%%%%%%%%%%%%%%%%%%%%%%%%%%%%%%%%%%%%%%%%%%%%%%%%%%%%%%%%%%%%%%%%%%

The exact value of $Z_q$ is known for certain graph classes. Instances of it are paths \cite{butler2020properties}, Zigzag graphs and Comb graphs \cite{butler2020properties}, trees when $q=1$ \cite{butler2020properties}, forests of star graphs \cite{blanco2023z_q}, caterpillar cycles when $q \leq 3$ \cite{blanco2023z_q}, complete bipartite and multipartite graphs \cite{fallat2024q}, and some types of Kneser graphs \cite{fallat2024q}. Also, there exist several bounds for $Z_q$, such as an upper bound for trees and upper bounds for caterpillar cycles by  by Blanco et al. \cite{blanco2023z_q}, and lower bounds for strongly regular graphs when $q \leq 1$ by Fallat et al. \cite{fallat2024q}. 
Here we determine the $Z_q$ for several families of graphs, extending known results about trees from \cite{blanco2023z_q} and \cite{butler2020properties}.

%In \cite{blanco2023z_q} the exact value of $Z_q$ is obtained for several graph classes such as trees and caterpillar cycles, and in \cite{butler2020properties} the exact value of $Z_q$ is determined for paths, Zigzag graphs, and Comb graphs, as well as $Z_1$ and $Z_0$ for trees.

%To do so we build on results from \cite{fallat2024q}, where the exact value of $Z_q$ is obtained for complete bipartite graphs, multipartite graphs, and certain types of Kneser graphs.

%%%%%%%%%%%%%%%%%%%%%%%%%%%%%%%%%%%%%%%%%%%%%%%%%%%%%%%%%%%%%%%%%%%%%%%%%%%%%%%%%%%%%
\subsection{Generalised star graphs}
%%%%%%%%%%%%%%%%%%%%%%%%%%%%%%%%%%%%%%%%%%%%%%%%%%%%%%%%%%%%%%%%%%%%%%%%%%%%%%%%%%%%%

A \textit{generalized star graph} (also called \textit{spider graph}) is a graph $G$ that consists of a central vertex $c$ (also known as the \textit{center}) and a set of disjoint paths $P_1, P_2, \ldots, P_k$, called \textit{paths}, each of which is attached to the central vertex at one of its endpoints and they may vary in length.

Let $H$ be a generalized star graph. For $q = 0$, as mentioned in \cite[Theorem 4]{butler2020properties}, $Z_0(H) = 1$. In this case, the optimal strategies for the oracle and the player are quite obvious. The oracle has only one strategy, which is choosing the only component selected by the player. The player spends one token on the center of the star. If there are still unfilled components (which are all in the form of a path), the player selects one and announces it to the oracle. After the oracle selects the chosen path, the player can apply the forcing rule on at least one of the ends of the path. Then the player can fill the rest of the vertices on the path using Rule 2. This strategy is optimal for the player, as filling out a graph requires at least one token.

Otherwise, if $q > 0$, we provide an optimal strategy for both the oracle and the player.

\begin{remark} 
It is optimal for the player to spend the tokens only on the endpoints of the paths connected to the center. Assume in the optimal strategy, the player spends a token on a node that is not an endpoint in $H$. Consider the path that contains this node (if it is the center, we can assume any path). If the player had spent the token at the endpoint of this path, the rest of the nodes in the path, including the center, could be filled using Rule 2. Therefore, it is optimal to spend the tokens only on the endpoints of paths (the leaves).
\end{remark}

We are now ready to provide the optimal strategies:

\begin{itemize}
\item \textbf{Optimal strategy for the oracle:} If two of the components announced by the player share a filled neighbor, the oracle chooses these two components. Otherwise, if there were a component which the center was not its neighbor, the oracle chooses that. Elsewhere, it chooses one randomly.
\item \textbf{Optimal strategy for the player:} If $H$ has $k$ leaves, then the player’s optimal strategy is to spend $k-1$ tokens on the leaves and then fill the rest of the vertices using Rule 2. This is because a path cannot be filled without spending any token on it, unless by using forcing rule on the center and filling the vertices on the path. However, this only happens if the center has only one uncolored neighbor.
\end{itemize}

If $G$ is a forest of generalized star graphs, then the strategies above are very similar to the approach by Blanco et al.\cite{blanco2023z_q}. Note that we must spend at least one token on each of the components since we cannot apply Rule 2 or Rule 3 without having any filled vertex. Afterward, since filling a vertex connected to the center allows filling the rest of the vertices on that path, the approach is the same as in \cite{blanco2023z_q}.

%%%%%%%%%%%%%%%%%%%%%%%%%%%%%%%%%%%%%%%%%%%%%%%%%%%%%%%%%%%%%%%%%%%%%%%%%%%%%%%%%%%%%
\subsection{Block graphs with cliques of size greater than two} \label{subsec: blockgraphs}
%%%%%%%%%%%%%%%%%%%%%%%%%%%%%%%%%%%%%%%%%%%%%%%%%%%%%%%%%%%%%%%%%%%%%%%%%%%%%%%%%%%%%

In Section \ref{subsec:polyalgoblckgraphs} we introduced block graphs and provided an algorithm for computing the zero forcing number of such graphs. Here we extend that result and prove that for any arbitrary $q$, $Z_q(G) = Z(G)$ when all blocks in the block graph have size greater than two.

\begin{theorem} \label{block_graphs_gen}
If $G$ is a block graph with cliques of size greater than two, then for any arbitrary $q$, it holds that $Z_q(G) = Z(G)$.
\end{theorem}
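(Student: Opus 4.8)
The plan is to reduce the claim to the single inequality $Z_0(G)\ge Z(G)$. Indeed, Proposition \ref{proposition:z+zqz} gives the monotone chain $Z_0(G)\le Z_q(G)\le Z(G)$ for every $q$, so the upper bound $Z_q(G)\le Z(G)$ is immediate; if in addition $Z_0(G)\ge Z(G)$, then all the inequalities in the chain collapse to equalities and we are done for every $q$ simultaneously. This reduction is convenient because for $q=0$ the oracle has no genuine choice (as noted in Section \ref{sec:polyalgocactusgraphs}), so the $Z_0$ game coincides with the static positive semidefinite forcing process $Z_0=Z_+$. I would therefore work entirely with PSD forcing and show that every PSD forcing set of $G$ has cardinality at least $Z(G)$.

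The engine of the lower bound is a PSD analogue of Lemma \ref{lemma:blockgrapheta}. Let $B$ be a leaf block, i.e.\ a block of size $\eta$ sharing a single cut vertex $v$ with the rest of $G$; its remaining vertices $u_1,\dots,u_{\eta-1}$ are pairwise twins whose closed neighbourhood is exactly $B$. I claim any PSD forcing set contains at least $\eta-2$ of the $u_i$. Suppose two of them, say $u_1,u_2$, are absent and hence must both be forced. Consider the first of the two to be filled, say $u_1$, forced by a filled vertex $x$ for which $u_1$ is the unique neighbour of $x$ in the unfilled component $W$ containing $u_1$. Since $u_2$ is still unfilled and adjacent to $u_1$, it lies in $W$; and since $N(u_1)\subseteq B$ we have $x\in B$, whence $x$ is adjacent to $u_2$ as well. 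Thus $x$ has two neighbours $u_1,u_2$ in $W$, contradicting the forcing condition. The same adjacency argument blocks Rule 2 and Rule 3 verbatim, since adjacent twins are never separated into different components, so the obstruction is robust across all the forcing variants and this is where the size-at-least-three hypothesis (which guarantees two such twins in every leaf block) is used.

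With this per-block bound in hand, I would prove $Z_+(G)\ge Z(G)$ by induction on the number of blocks, mirroring the optimality proof of the algorithm behind Theorem \ref{Theorem: Z algorithm block graph}. In the inductive step, pick a leaf block $B$ of size $\eta$ with cut vertex $v$, delete $B\setminus\{v\}$ to obtain a block graph $G'$ with one fewer block, and argue that the restriction of a PSD forcing set $S$ of $G$ to $V(G')$ is itself a PSD forcing set of $G'$ using at most $|S|-(\eta-2)$ tokens. Combined with the recurrence $Z(G)=Z(G')+(\eta-2)$ (and its variant according to whether $v$ is filled inside $B$) and the induction hypothesis $|S\cap V(G')|\ge Z(G')$, this yields $|S|\ge Z(G)$.

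The main obstacle is precisely this inductive reduction. Deleting $B\setminus\{v\}$ is harmless for the neighbourhoods of all vertices of $G'$ except $v$, because the non-cut vertices of $B$ have no neighbours outside $B$; but I must track the status of $v$ exactly as the algorithm does, namely either $\eta-1$ vertices of $B$ are filled and $v$ may be treated as already filled in $G'$, or exactly $\eta-2$ are filled and $v$ must later be forced from outside, in which case the last vertex of $B$ is only forced after $v$. Verifying that $S\cap V(G')$ still PSD-forces $G'$ under the correct one of these two conventions, i.e.\ that no force used in $G$ essentially depended on a deleted twin in a way that cannot be reproduced in $G'$, is the delicate point. As an alternative to the $q=0$ reduction, one could instead argue the lower bound directly in the $Z_q$ game for arbitrary $q$ by exhibiting an oracle strategy that always returns the component carrying a pair of unfilled twins, thereby forcing the player to spend a token on one of them; the twin obstruction above shows this strategy is legitimate for every $q$.
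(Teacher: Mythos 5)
Your argument is correct, and it takes a genuinely different route from the paper's. The paper proves the statement by induction on the number of blocks \emph{inside the $Z_q$ game itself}: it takes an optimal player strategy, isolates a leaf block $B$ with cut vertex $v$, and performs strategy surgery (reordering moves so that $v$ is filled before $B\setminus\{v\}$) to show that Rule~3 can always be replaced by Rule~2, whence $Z_q(G)\ge Z(G)$. You instead collapse the whole chain of Proposition \ref{proposition:z+zqz} by proving the single extreme inequality $Z_0(G)=Z_+(G)\ge Z(G)$, which lets you work with the static positive semidefinite forcing process and ignore the oracle and the parameter $q$ entirely. The combinatorial core is the same in both proofs --- two unfilled twins in a leaf block are adjacent, hence never separated into different components, so no common neighbour can force either, which yields the $\eta-2$ tokens per leaf block (this is the paper's Lemma \ref{lemma:blockgrapheta} and the in-line argument in its Case~1) --- but your framing buys a cleaner logical structure (no reasoning about arbitrary $q$, no reordering of game moves) and gives $Z_+(G)=Z(G)$ explicitly as a byproduct, while the paper's framing additionally produces an explicit Rule-3-free optimal strategy that it reuses later when describing the player's and oracle's strategies. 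The one step you flag as delicate does go through: since vertices of $U=B\setminus\{v\}$ have no neighbours outside $B$, they can only ever force inside $B$; when $|S\cap U|=\eta-2$ the remaining twin and $v$ are adjacent and unfilled, so $v$ cannot be forced from inside $B$ and every force used in $G'$ survives the deletion of $U$ (components only shrink), giving $|S\setminus U|\ge Z_+(G')$; when $|S\cap U|=\eta-1$ one instead gets that $(S\setminus U)\cup\{v\}$ forces $G'$, and both cases combine with $Z(G)=Z(G')+\eta-2$ to close the induction.
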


\begin{proof}
    We prove this by induction on the number of blocks in the graph. The base case is a graph with one block. Assume that the block has size $\eta > 2$. In this case, since the graph is a complete graph, to apply Rule 2 or Rule 3, we need to spend a token on $\eta-1$ vertices. After filling $\eta-1$ vertices, the remaining unfilled vertex can be filled using Rule 2. Hence, in this case, we have $Z_q(G) = Z(G)$. Now, assume that $G$ is a block graph with $k$ blocks, and assume the induction hypothesis holds: $Z_q(G) = Z(G)$ for any block graph with $k-1$ blocks whose block sizes are greater than two. Consider a block where only one of its vertices is shared with other blocks (such a block must exist). Let $v$ be the shared vertex, and let $U = \{u_1, u_2, \dots, u_{\eta-1} \}$ represent the other vertices in the block. Now, consider an optimal strategy for the player. There are two cases:

\begin{itemize}
    \item \textbf{In the optimal strategy $v$ is filled before $u_1, u_2, \dots, u_{\eta-1}$:} 
    If $v$ is filled before the other vertices in $U$, none of the vertices in $U$ can be filled using Rule 3 or Rule 2 after $v$ gets filled, as their only filled neighbor is $v$, and all vertices in $U$ belong to the same component. Furthermore, none of the vertices in $U$ can be filled using Rule 2 or Rule 3 unless $\eta-2$ vertices in $U$ are filled, because they all remain in the same component. Once all but one vertex in $U$ are filled, the remaining vertex can be filled by Rule 2. Thus, an optimal strategy exists where Rule 3 is not applied to any vertices in $U$. In this optimal strategy, if Rule 3 is used, it must be applied to vertices in $V(G) \setminus U$, which is a graph with $k-1$ blocks. However, by the induction hypothesis, we know there exists an optimal strategy for filling $V(G) \setminus (U \setminus \{v\})$ without using Rule 3. 
    
    Note that we have proved that we need to spend $\eta - 2$ tokens in $U$, and we are considering the case where $v$ gets filled before any vertex in $U$. Therefore, from any optimal strategy where $v$ gets filled before $U$, we can simply obtain an optimal strategy for the induced subgraph on vertices $V \setminus U$ by only considering the steps that are applied to these vertices in the given optimal strategy. This also hols the other way around. For any optimal strategie for the induced subgraph on vertices $V \setminus U$, we can extend this optimal strategie to an optimal stratgy for graph $G$ by spending $\eta - 2$ tokens in vertices $U$ and consider the whole block filled imediatly after $v$ gets filled in the primal optimal strategy. Thus, by induction, in this case there is an optimal strategy that does not require applying Rule 3 to $V(G) \setminus U$ or to the vertices in $U$. Hence, we have proved that $Z_q(G) = Z(G)$ for a block graph $G$ with $k$ blocks.

    \item \textbf{Any other case:} We show that any optimal strategy can be transformed into another optimal strategy in which $v$ gets filled sooner than the vertices in $U$. Note that the vertices in $U$ cannot force any vertex outside $U \cup \{v\}$ to be filled, since they are not neighbors of any such vertex. The only vertex in the considered block that can force a vertex outside $U$ to be filled using Rule 2 or 3 is $v$, which requires $v$ to be filled at first. Therefore, as long as the vertices in $U$ do not force $v$ to be filled, we can assume that all the vertices in $U$ get filled immediately after $v$, i.e., all the steps in the optimal strategy involving vertices in $U$ can be reordered to occur after filling $v$, following the same argument as in the previous case. Thus, we have shown that if $v$ gets filled independently from the vertices in $U$, our argument holds, and we can transform the initially optimal strategy into a strategy that fills $v$ sooner than any vertex in $U$. The only challenge is when $v$ gets forced by a vertex in $U$. For this to happen, all the vertices in $U$ must have already been filled, since $v$ and any unfilled vertex in $U$ are in the same component. Notice that to fill all the vertices in $U$ before filling $v$, there is no way other than spending $\eta - 1$ tokens on the vertices in $U$. Instead, we could simply assume $v$ is filled using a token, and then another arbitrary vertex in $U$ is forced to be filled. This, as discussed before, can be considered as a previous case. 
    
    Therefore, no matter what optimal strategy we are considering, we can always transform it into another optimal strategy that satisfies the conditions of the previous case, ensuring that all the statements in the previous case also hold in other cases.
    
\end{itemize}

Thus, we have proved that there exists an optimal strategy that does not use Rule 3, and therefore $Z_q(G) = Z(G)$.
\end{proof}

Now, we provide the optimal strategies for both the player and the oracle.
\begin{itemize}
    \item \textbf{Oracle's strategy:} The oracle acts to minimize the player's advantage. Specifically, if there is a component containing a vertex that can be filled using Rule 2 (and no other vertex in this component can be filled using Rule 3 after the union with filled vertices), the oracle announces this component. Additionally, if there is a nonempty subset of components such that the union of these components with the filled vertices leaves any filled vertex with either no unfilled neighbors or more than two unfilled neighbors, the oracle announces this set. Otherwise, the oracle announces the component with the smallest size.

    \item \textbf{Player's strategy:} The player can use an optimal strategy that, by Theorem \ref{block_graphs_gen}, does not involve the use of Rule 3. Therefore, the strategy is to identify $Z(G)$ vertices and fill them using Rule 1. This has already been covered in Section \ref{subsec:polyalgoblckgraphs}. The player can simply run the algorithm stated in Theorem \ref{Theorem: Z algorithm block graph} to determine the vertices that need to be filled.
\end{itemize}

%%%%%%%%%%%%%%%%%%%%%%%%%%%%%%%%%%%%%%%%%%%%%%%%%%%%%%%%%%%%%%%%%%%%%%%%%%%%%%%%%%%%%
\subsection{Generalized windmill graphs}
%%%%%%%%%%%%%%%%%%%%%%%%%%%%%%%%%%%%%%%%%%%%%%%%%%%%%%%%%%%%%%%%%%%%%%%%%%%%%%%%%%%%%

A \emph{windmill graph}, denoted by $W(\eta, k)$, consist of $\eta$ copies of the complete graph $K_k$, with every node connected to a common node. Two generalization of these graphs, recently introduced by Kooij \cite{kooij2019generalized}, are the so called generalized windmill graphs. For both generalization we replace the central node, connecting all $\eta$ copies of the complete graph $K_k$, by $l$ central nodes. For the first generalization, we assume the $l$ central nodes are all connected, i.e. they form a clique $K_l$. We
call this a \emph{generalized windmill graph of Type I} and denote it by $W'(\eta,k,l)$. Note that $W'(\eta,k,1)=W(\eta,k)$. For the second generalization, we assume the $l$ central nodes have no connections among each other. We will refer to it as a \emph{generalized windmill graph of Type II} and denote it by $W''(\eta,k,l)$. Moreover, for a generalized windmill graph of Type I, $ W'(\eta, k, l) $, and a generalized windmill graph of Type II, $ W''(\eta, k, l) $, we define the set $ C $ as the set of $ l $ central vertices and the set $ Q $ as the set of $ \eta \times k $ vertices in the $ \eta $ complete graphs $ K_k $.

%%%%%%%%%%%%%%%%%%%%%%%%%
%\subsubsection{Generalized windmill graphs of Type I}
%%%%%%%%%%%%%%%%%%%%%%%%%

\begin{theorem} \label{Theorem: windmill I}
    Let $ G := W^{\prime}(\eta, k, l) $, $k>1$, be a generalized windmill graph of Type I. Then $ Z_q(G) = Z(G) = \eta \times (k-1) + l$.
\end{theorem}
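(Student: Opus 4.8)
The plan is to prove both equalities at once by sandwiching $Z_q(G)$. Since Proposition~\ref{proposition:z+zqz} gives $Z_q(G)\le Z_n(G)=Z(G)$ for every $q$, it suffices to (i) produce a zero forcing set of size $\eta(k-1)+l$, which bounds $Z(G)$ (and hence every $Z_q(G)$) from above, and (ii) show that \emph{every} play of the $Z_q$-game that fills $G$ spends at least $\eta(k-1)+l$ tokens, for every $q\ge 0$. Writing $n=|V(G)|=\eta k+l$, these give $\eta(k-1)+l\le Z_q(G)\le Z(G)\le \eta(k-1)+l$, which forces all the claimed equalities.

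For the upper bound I would fill, in each of the $\eta$ copies of $K_k$, all but one vertex, together with all $l$ central vertices of $C$; this uses $\eta(k-1)+l$ tokens. Since $k>1$, each copy still contains a filled vertex, and such a vertex sees exactly one unfilled neighbour (the single omitted vertex of its own copy, because $C$ is entirely filled); thus Rule~2 fills the whole graph, so the chosen set is a zero forcing set.

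The lower bound is the heart of the argument. I would phrase it as: in any successful play the number of vertices filled by forcing (Rule~2 or Rule~3) is at most $\eta$, so at least $n-\eta=\eta(k-1)+l$ tokens are spent. Two structural observations drive this. First, the $k$ vertices of a single copy are mutually \emph{true twins}, sharing the closed neighbourhood consisting of their copy together with $C$, and the $l$ central vertices of $C$ are mutually true twins as well (each being universal). A standard twin argument then shows that at most one vertex per twin class can ever be forced. The delicate point is that this must survive Rule~3: two adjacent unfilled twins always lie in the same unfilled component, so the oracle's chosen set contains both or neither, and the common-neighbour obstruction persists inside the announced subgraph. This already yields at most $\eta+1$ forced vertices.

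The remaining, and most delicate, step is to exclude the extremal case of exactly $\eta+1$ forced vertices, namely one forced vertex per copy together with one central vertex $c\in C$. Here I would examine the \emph{first} forced vertex $w$ of the play: at that instant all of these $\eta+1$ vertices, in particular the universal vertex $c$, are still unfilled. Because $c$ is adjacent to every other vertex, the unfilled subgraph is connected, hence a single component; so Rule~3 is inapplicable when $q\ge 1$ and merely reduces to ordinary Rule~2 on all of $G$ when $q=0$. A short case check then shows that whatever filled vertex $x$ attempts the force has a second unfilled neighbour (the vertex $c$ itself when $w\neq c$, or some still-unfilled forced copy-vertex in $x$'s own copy when $w=c$), contradicting the uniqueness required to force. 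Thus no first force can occur, ruling out $\eta+1$ and completing the lower bound. The main obstacle to anticipate is exactly making the twin/forcing argument robust against the oracle's Rule~3 choices; the universality of the vertices in $C$ is what keeps the unfilled region connected and thereby neutralises Rule~3 throughout the play.
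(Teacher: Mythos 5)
Your proof is correct, but it is organized quite differently from the paper's. The paper first proves $Z_q(G)=Z(G)$ by a local replacement lemma: a case analysis on where the forcing vertex $v$ and the forced vertex $u$ sit (in a copy $Q'$ or in $C$) shows that whenever Rule~3 could force $u$, every other vertex of the relevant clique $Q'\cup C$ must already be filled --- since any unfilled common neighbour of $u$ and $v$ stays in $u$'s component no matter what the oracle announces --- so Rule~2 would have sufficed; the value $Z(G)=\eta(k-1)+l$ is then read off from the necessary conditions for a Rule~2 force. You instead sandwich $Z_q(G)$ directly: the explicit forcing set gives the upper bound, and for the lower bound you count forced vertices globally over the whole play, using that each copy of $K_k$ and the set $C$ are true-twin classes, that at most one vertex per class can ever be forced (your observation that adjacent unfilled twins always lie in one component is precisely what neutralises Rule~3), and that the extremal count $\eta+1$ is impossible because the first force would have to occur while the universal, still-to-be-forced central vertex is unfilled, leaving every potential forcer with two unfilled neighbours. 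The underlying structural fact is the same in both arguments, but your global count makes the lower bound $n-\eta$ more explicit than the paper's rather informal concluding step, at the price of the extra extremal-case analysis, while the paper's replacement lemma is packaged so it can be reused for the Type~II windmills. One small imprecision to fix: in the subcase $w=c$ with the would-be forcer $x\in C$, the second unfilled neighbour is not a vertex of ``$x$'s own copy'' (there is none) but any of the $\eta$ still-unfilled designated copy vertices, all of which are adjacent to the universal vertex $x$.
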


\begin{proof}
    Consider an optimal strategy for the player. We will show that whenever Rule 3 is used by the player, Rule 2 can be used instead. Thus, we provide a strategy that requires the same number of tokens without using Rule 3. This proves $Z_q(G) = Z(Q)$.

    Suppose that, in an optimal strategy, there exists a step where vertex $ v $ can force vertex $ u $ to be filled by the player using Rule 3. There are three possible cases.
    \begin{itemize}
        \item \textbf{$v \in Q$ and $u \in Q \cup C$:} Let $ Q' $ be the clique that contains $ v $. Note that if $ u \in Q $ is forced by $ v $ using Rule 3, both vertices must be in the same clique (as they would otherwise be disconnected). If $ u \in C $, since all vertices in $ C $ are connected to all vertices of $ G $, the vertices in $ Q' \cup C $ also form a clique. For $ v $ to force $ u $ to be filled, all other vertices in $ Q' \cup C $ (i.e., all vertices except $ u $) must already be filled. This is because if there is an unfilled vertex $ w $ in $ Q' \cup C  \setminus \{u\}$, then by the definition of a generalized windmill graph of Type I, $ w $ would be a neighbor of both $ u $ and $ v $. Thus, regardless of the player’s choice of components or the oracle’s selection, $ w $ would remain in the same component as $ u $, and $ v $ would not be able to force $ u $ to be filled. Therefore, all vertices in $ Q' \cup C \setminus \{u\} $, must already be filled. If this condition holds, $ v $ could fill $ u $ using Rule 2 instead, as all of $v$’s other neighbors are already filled.

        \item \textbf{$v \in C$ and $u \in Q$:} Let $ Q' $ be the clique that contains $u$. For $ v $ to force $ u $ to be filled, all other vertices in $ Q' \cup C $ (i.e., all vertices except $ u $) must already be filled. This is because if there is an unfilled vertex $ w $ in $ Q' \cup C  \setminus \{u\}$, then by the definition of a generalized windmill graph of Type I, $ w $ would be a neighbor of both $ u $ and $ v $. Thus, regardless of the player’s choice of components or the oracle’s selection, $ w $ would remain in the same component as $ u $, and $ v $ would not be able to force $ u $ to be filled. Therefore, all vertices in $ Q' \cup C \setminus \{u\} $, must already be filled. If this condition holds, another vertex in $Q'$ could fill $ u $ using Rule 2 instead, as all of it’s other neighbors are already filled.

        \item \textbf{$v \in C$ and $u \in C$:} For $ v $ to force $ u $ to be filled using Rule 3, all vertices in $ Q \cup C \setminus \{u\} $ must already be filled. This is because if there is an unfilled vertex $ w $ in $ Q \cup C \setminus \{u\} $, by the definition of a generalized windmill graph of Type I, $ w $ would be a neighbor of both $ v $ and $ u $, leading to the same conclusion as in the previous cases.
    \end{itemize}

    We have shown that in any optimal strategy, if a vertex is filled using Rule 3, it could instead be filled using Rule 2. Therefore, we have provided a strategy that uses the same number of tokens (as Rule 1 was not used in any step of the proof) and thus constitutes an optimal strategy without relying on Rule 3. This proves that $Z_q(G) = Z(Q)$.
    \\

    Now we need to prove that $ Z(G) = \eta \times (k-1) + l $. By following the steps used to prove $ Z_q(G) = Z(G) $, this equation is straightforward to observe. Note that, as established above, we cannot force a vertex $ v $ to be filled by applying Rule 2 if any of the following cases occur:

    \begin{itemize}
        \item $ v \in Q $ and another vertex in the same clique as $ v $ is unfilled.
        \item $ v \in Q $ and a vertex in $ C $ is unfilled.
        \item $ v \in C $ and a vertex in $ C $ is unfilled or at least one vertex in each clique is unfilled.
    \end{itemize}

    Notice that to force a vertex in $ C $, all the other vertices in $C$ and all the vertices in at least one clique must already be filled. Similarly, to force a vertex in $ Q $, all vertices in $ C $ and all other vertices in the clique containing $ v $ must be filled. Therefore, the optimal strategy is to spend a token on all vertices except for one arbitrary vertex in each clique, requiring exactly $ \eta \times (k-1) + l $ tokens. This proves that $ Z_q(G) = Z(G) = \eta \times (k-1) + l $.
\end{proof}

\begin{theorem} \label{Theorem: windmill edge case}
    Let $ G := W^{\prime}(\eta, k, l) $, $k = 1$, be a generalized windmill graph of Type I. Then

    $$Z_q(G) = 
    \begin{cases}
        l & \text{for } \eta = 1, \\
        l + \eta - 2 & \text{for } q > 0 \text{ and } \eta > 1, \\
        l & \text{for } q = 0.
    \end{cases}$$
\end{theorem}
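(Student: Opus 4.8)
The plan is to realize $G=W'(\eta,1,l)$ as the join of the central clique $C=K_l$ with the independent set $Q$ of the $\eta$ leaves: the leaves are pairwise nonadjacent and each is adjacent to all of $C$. The whole argument rests on one structural fact, which I would isolate as a claim: \emph{as long as at least one central vertex is unfilled, all unfilled vertices lie in a single component of $G[V\setminus F]$}, since any unfilled leaf and any unfilled central are both adjacent to that unfilled central. I would then record two consequences. \textbf{(a)} While at least two central vertices are unfilled we have $k=1$, so Rule~3 is unavailable for $q\ge1$ (it needs $k>q\ge1$) and for $q=0$ it degenerates to Rule~2 on all of $G$; moreover no Rule~2 force is possible, because a filled central then has $\ge2$ unfilled neighbours and a filled leaf has all of its (central) neighbours among the $\ge2$ unfilled ones. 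Hence the process is stuck, so any forcing set must contain at least $l-1$ central vertices. \textbf{(b)} A leaf can be forced only once \emph{all} centrals are filled, and even then only when it is the last unfilled leaf; symmetrically a central can be forced only when it is the unique unfilled central (by a filled leaf). As a quick special case, for $\eta=1$ the graph is $K_{l+1}$, and since $Z_+(K_{l+1})=Z(K_{l+1})=l$, Proposition~\ref{proposition:z+zqz} pins $Z_q(G)=l$ for every $q$.

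For $q\ge1$ and $\eta>1$ I would prove the two bounds separately. For the upper bound, place tokens on $l-1$ centrals and $\eta-1$ leaves (total $l+\eta-2$); a filled leaf forces the last central by Rule~2, after which that central forces the last leaf by Rule~2. Since this uses only Rule~2, it is oracle-proof. For the lower bound I combine the consequences above. By \textbf{(a)} at least $l-1$ centrals must be tokened. For the leaves I invoke the oracle's blocking strategy: whenever all centrals are filled and at least two leaves remain, no leaf can be forced. Indeed, if Rule~3 is available ($k>q$) the player must offer $\ge q+1\ge2$ singleton components and the adversarial oracle returns $\ge2$ of them, so in $G[F\cup\bigcup T]$ every filled central has $\ge2$ unfilled neighbours and every filled leaf none; and if Rule~3 is unavailable ($k\le q$) then Rule~2 already fails for the same reason. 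Thus at most one leaf is ever forced, forcing $\ge\eta-1$ leaf tokens. As the two counts live on disjoint vertex sets, $|S|\ge(l-1)+(\eta-1)=l+\eta-2$.

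For $q=0$ the oracle has no genuine choice, so the multi-component power of Rule~3 is fully available to the player once all centrals are filled: offering a single leaf-component forces it. For the upper bound, token all $l$ centrals and force the leaves one by one, giving $l$. For the lower bound, \textbf{(a)} again forces at least $l-1$ central tokens; if exactly $l-1$ are tokened then the unique unfilled central can only be forced by a filled leaf, which needs a leaf token, so $|S|\ge l$, while if all $l$ centrals are tokened then already $|S|\ge l$. Hence $Z_0(G)=l$.

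The step I expect to be the main obstacle is making consequence \textbf{(b)} and the oracle's blocking rigorous over the \emph{entire} game rather than a tidy endgame: I must argue that the adversarial oracle can always keep $\ge2$ leaves inside one activated batch, so that the player never forces a leaf except possibly the very last one, and I must treat the degenerate boundary where there is no filled leaf to ignite the process (exactly the $\eta=1$ phenomenon, which is why that case returns $l$ rather than $l-1$). Everything else is bookkeeping over $|S\cap C|$ and $|S\cap Q|$.
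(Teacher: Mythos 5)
Your proposal is correct and follows essentially the same route as the paper: the same join-structure observation (all unfilled vertices lie in one component while a central is unfilled), the same constructive fillings for the upper bounds, and the same adversarial oracle argument (returning at least two singleton leaf components when $q\ge 1$, versus the player offering a single component when $q=0$), with your direct counts $|S\cap C|\ge l-1$ and $|S\cap Q|\ge \eta-1$ being a trivial repackaging of the paper's pigeonhole step, and your $\eta=1$ shortcut via Proposition~\ref{proposition:z+zqz} replacing the paper's direct clique argument. One cosmetic caveat: your consequence \textbf{(b)} as stated (``only when it is the last unfilled leaf'') is true only for $q\ge 1$, but you correctly override it in the $q=0$ analysis, so nothing is lost.
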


\begin{proof}
We provide proofs for the three cases separately.

\begin{itemize}
    \item \textbf{$\eta = 1$, $q > 0$:}  
    In this case, $W'(1,1,l) = K_{l+1}$. To fill the entire clique, the player must spend tokens on all vertices except one; the last unfilled vertex can then be forced by Rule~2, using any filled neighbor (since all other vertices are already filled).  
    To see why fewer tokens do not suffice, suppose two vertices $u$ and $v$ remain unfilled. Neither can be forced by Rule~2 or Rule~3, since each is adjacent to every other vertex, including each other. Thus, for a clique of size $l+1$, exactly $l$ tokens are required, regardless of the value of $q$.

    \item \textbf{$\eta > 1$, and $q > 1$:}  
    We first show that the whole graph cannot be filled using only $l + \eta - 3$ tokens, and then present a strategy using $l + \eta - 2$ tokens.

    Assume there exists a strategy using at most $l + \eta - 3$ tokens. By the pigeonhole principle, either two vertices in $C$ receive no token, or two vertices in $Q$ receive no token. There are two cases:

    \begin{itemize}
        \item If two vertices $u,v \in C$ are unfilled, then neither can be forced by Rule 2 or Rule 3, as they are adjacent to each other and to every other vertex. \\ 

        \item If two vertices $u,v \in Q$ are unfilled, then since $k=1$, they must each be forced by a vertex in $C$. But if one vertex of $C$ is also unfilled, then all unfilled vertices lie in the same component, making Rule 3 unusable. Meanwhile, no vertex in $C$ can force $u$ or $v$ using Rule 2, since each such vertex has at least two unfilled neighbors in $Q$. If instead all unfilled vertices lie in $Q$, then because $q>1$, the oracle selects multiple components, ensuring that each vertex in $C$ has at least two unfilled neighbors in $Q$. Again, Neither Rule 2 nor Rule 3 can be applied.  
    \end{itemize}
    In both cases we reach a contradiction. Hence, at least $l+\eta-2$ tokens are required.\\\\
    Now we show the sufficiency of spending $l+\eta-2$ tokens. First, the player fills all vertices except one in $C$ and one in $Q$, which requires $l+\eta-2$ tokens. Since $\eta > 1$, at least one vertex in $Q$ is filled; let us call it $v$. The unfilled vertex in $C$ is the unique unfilled neighbor of $v$, so $v$ forces it by Rule 2 to be filled. Finally, with all vertices of $C$ filled, the remaining unfilled vertex in $Q$ can also be forced to be filled using Rule 2. Thus the graph is completely filled using $l + \eta - 2$ tokens, proving that $Z_q(G) = l + \eta - 2$.

    \item \textbf{$\eta > 1$, $q = 0$:}  
    We again show that fewer than $l$ tokens are insufficient, and then present a strategy with $l$ tokens.  

    Suppose the player uses only $l-1$ tokens. Then at least one vertex $v \in C$ is unfilled. Consider how $v$ might be forced. There are two cases:

    \begin{itemize}
        
        \item If a vertex $u \in C$ were to force $v$, there would exist another unfilled vertex (since $l-1 < l+\eta-1$). This additional unfilled vertex would also be adjacent to both $u$ and $v$, preventing $u$ from forcing $v$ to be filled using Rule 2 or Rule 3.\\  
    
        \item If a vertex $u \in Q$ were to force $v$, then, since at most $l-1$ tokens were used, at least two vertices in $C$ would remain unfilled when $u$ attempts to force $v$. As these vertices are both adjacent to $u$ and to each other, $u$ cannot apply Rule 2 or Rule 3 to force $v$ to be filled.
    \end{itemize}
    In either case, $v$ cannot be forced, so $l-1$ tokens do not suffice.\\\\   
    For sufficiency, the player spends $l$ tokens to fill all vertices of $C$. Since $q=0$, the oracle has no choice in Rule 3. At each step, the player selects an unfilled vertex of $Q$ (note that $Q$ is independent since $k=1$). The oracle must accept this choice, and the vertex is forced by Rule 3 using a vertex in $C$. Repeating this process fills all of $Q$. Thus, the graph is completely filled using $l$ tokens, so $Z_q(G) = l$ in this case.
\end{itemize}

Therefore, in all three cases we obtain the equality stated in the theorem.
\end{proof}

%%%%%%%%%%%%%%%%%%%%%%%%%
%\subsubsection{Generalized windmill graphs of Type II}
%%%%%%%%%%%%%%%%%%%%%%%%%

\begin{theorem}
    Let $G := W''(\eta,k,l)$ be a generalized windmill graph of Type II and $k, \eta, l>1$. Then
    $$Z_q(G) = 
    \begin{cases}
        \min \{\eta \times (k-1) + l , \eta \times k\} & \text{if } q = 0, \\
        \eta \times (k-1) + l & \text{for } q \geq 1.
    \end{cases}$$
\end{theorem}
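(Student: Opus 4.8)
The plan is to split the analysis according to whether $q=0$ or $q\geq 1$, and in each regime to prove a matching upper and lower bound. Throughout I would write $C$ for the independent set of $l$ central vertices and $Q_1,\dots,Q_\eta$ for the $\eta$ cliques of size $k$, so that every vertex of $Q:=\bigcup_i Q_i$ is adjacent to all of $C$, while two distinct cliques are joined only through $C$. It is convenient to track the quantity $(\text{number of forced vertices}) = \eta k + l - (\text{tokens spent})$, so that a lower bound on $Z_q(G)$ is equivalent to an upper bound on how many vertices the player can fill for free against the oracle.

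For the upper bounds I would exhibit explicit strategies. The bound $Z_q(G)\le \eta(k-1)+l$ holds for every $q$ (indeed already for ordinary zero forcing): fill all of $C$ and all but one vertex of each clique with tokens; then in each clique the last vertex is the unique unfilled neighbour of any filled clique-mate and is forced by Rule~2. Since $Z_q(G)\le Z_n(G)=Z(G)$ by Proposition~\ref{proposition:z+zqz}, this yields the upper bound for all $q$. For $q=0$ I would give a second strategy: fill all of $Q$ with $\eta k$ tokens; each central vertex then becomes an isolated unfilled component, the player announces one such component, and because $q=0$ the oracle must return it, after which any neighbour in $Q$ forces that central vertex by Rule~3. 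Iterating fills all of $C$, so $Z_0(G)\le\min\{\eta(k-1)+l,\eta k\}$.

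The lower bounds rest on three structural facts, each following from the adjacency pattern. First, no vertex of a clique $Q_i$ can be forced while two of its vertices are unfilled, since any filled neighbour of one is also a neighbour of the other and the two lie in a common unfilled component. Second, a clique vertex can be forced only after all of $C$ has been filled, because an unfilled central vertex is an extra unfilled neighbour, in the same component, of every clique vertex. Third, to force a central vertex $c$ one needs some clique $Q_i$ completely filled; moreover, whenever at least two central vertices are unfilled the force is blocked, because if some $Q$-vertex is still unfilled then two central vertices share a component (so no filled vertex has a unique unfilled central neighbour), and if all of $Q$ is filled the remaining central vertices are isolated components, where for $q\ge 1$ the oracle defeats Rule~3 by returning at least two of them. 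I would then analyse the first central vertex that is forced, using these facts together with the observation that the clique used to force a central vertex cannot itself have been completed by forcing (that would require all of $C$ filled). This shows that for $q\ge 1$ at most $\eta$ vertices are forced in total, giving $Z_q(G)\ge \eta(k-1)+l$, and that for $q=0$ at most $\max\{\eta,l\}$ vertices are forced, giving $Z_0(G)\ge \min\{\eta(k-1)+l,\eta k\}$.

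The main obstacle is the careful treatment of Rule~3 against an adversarial oracle in the third fact, and the bookkeeping in the case analysis on the first forced central vertex. The delicate point is the interaction between the two kinds of free fills: forcing a central vertex requires a token-full clique, which consumes the savings that clique could otherwise contribute, so one must rule out the coexistence of a central force together with a full set of $\eta$ clique forces (this is what yields the bound $\eta$ rather than $\eta+1$ for $q\ge 1$). I would also verify the boundary between the two regimes precisely: the identity $\eta(k-1)+l=\eta k$ exactly when $l=\eta$ explains why the $\eta k$ option improves on the Rule~2 strategy only when $l>\eta$, and why for $q\ge 1$ the oracle's ability to bundle isolated central components removes that option, collapsing the minimum to the single value $\eta(k-1)+l$.
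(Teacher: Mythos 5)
Your upper-bound strategies and the final formulas are correct, and your general plan (bound the number of vertices the player can fill for free) is viable, but your second structural fact is false, and this leaves a real gap in the lower bound. You claim that a clique vertex can be forced only after all of $C$ has been filled, on the grounds that an unfilled central vertex is an extra unfilled neighbour, in the same component, of every clique vertex. That argument only rules out a forcer \emph{inside the clique}; it does not rule out a \emph{filled central vertex} as the forcer. A filled $c'\in C$ has neighbourhood exactly $Q$, so it forces $u\in Q_i$ by Rule~2 as soon as $u$ is the unique unfilled vertex of $Q$, no matter how many central vertices remain unfilled. Concretely, in $W''(2,2,2)$ with $C=\{c_1,c_2\}$, $Q_1=\{a_1,a_2\}$, $Q_2=\{b_1,b_2\}$: put tokens on $c_1,a_1,a_2,b_1$; then $c_1$ forces $b_2$ while $c_2$ is still unfilled, and afterwards $b_1$ forces $c_2$. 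This also refutes the derived observation you lean on, that the clique used to force a central vertex cannot itself have been completed by forcing---here $Q_2$ forces $c_2$ even though $b_2$ was obtained for free. So your count of ``at most $\eta$ forced vertices for $q\ge 1$'' does not follow as written.

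The bound is repairable, but it needs a trade-off you have not supplied: when a clique vertex is forced by a central vertex while some central vertex is still unfilled, that vertex must be the unique unfilled vertex of all of $Q$ (all unfilled $Q$-vertices lie in one component through the unfilled centre, and the central forcer is adjacent to all of them), so at that moment every other clique is fully token-filled and contributes no free vertex; this mechanism can fire for at most one clique, and a short case analysis then shows the total number of free fills is still at most $\eta$ for $q\ge 1$ and at most $\max\{\eta,l\}$ for $q=0$. For comparison, the paper takes a different route that sidesteps this bookkeeping: it shows that for $q\ge 1$ every application of Rule~3 can be replaced by Rule~2 (explicitly treating the case ``$v\in C$ forces $u\in Q$'' that trips up your fact), concludes $Z_q(G)=Z(G)$, and only then computes $Z(G)$; for $q=0$ it isolates the single situation where Rule~3 genuinely helps (all of $Q$ filled, central singletons announced one at a time), which coincides with the second strategy you found.
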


\begin{proof}
    Notice that if vertex $v$ can force a vertex $u$ to be filled using Rule 3 or Rule 2, either both of them should be in a same clique or one of them is $Q$ and the other is $C$. In other cases since $v$ and $u$ can not be neighbors, $v$ can not force $u$ to be filled.
    \\
    Now, we want to consider all possible cases of placement $v$ and $u$ and show if $q \neq 0$, $Z_q(G) = Z(G) = \eta \times (k-1) + l$, otherwise $Z_q(G) = \min \{\eta \times (k-1) + l , \eta \times k\}$. \\

    Suppose that, in an optimal strategy, there exists a step where vertex $ v $ can force vertex $ u $ to be filled by the player using Rule 3.

    \begin{itemize}
        \item \textbf{$v$ and $u$ in a same clique:} Let $Q'$ be the clique which $u$ and $v$ are in. For $v$ to be able to force $u$ to be filled by Rule 3, all the vertices in $Q' \setminus \{u\} \cup C$ must be already filled. This is because if there is an unfilled vertex $ w $ in $ Q' \cup C  \setminus \{u\}$, then by the definition of a generalized windmill graph of Type II, $ w $ would be a neighbor of both $ u $ and $ v $. Thus, regardless of the player’s choice of components or the oracle’s selection, $ w $ would remain in the same component as $ u $, and $ v $ would not be able to force $ u $ to be filled. Therefore, all vertices in $ Q' \cup C \setminus \{u\} $, must already be filled. If this condition holds, $ v $ could fill $ u $ using Rule 2 instead, as all of $u$’s neighbors are already filled.

        \item \textbf{$v \in C$ and $u \in Q$:} 
        Let $ Q' $ be the clique that contains $ u $. For $ v $ to be able to force $ u $ to be filled using Rule 3, all vertices in $ Q' \setminus \{u\} $ must already be filled. This is because if there exists an unfilled vertex $ w $ in $ Q' \setminus \{u\} $, then by the definition of a generalized windmill graph of Type II, $ w $ would be a neighbor of both $ u $ and $ v $. Thus, regardless of the player’s choice of components or the oracle’s selection, $ w $ would remain in the same component as $ u $, preventing $ v $ from forcing $ u $ to be filled. Notice that if there exists a vertex $ v^{\prime} \in C $ that is not filled, then there is only one component of unfilled vertices. This is because all unfilled vertices in $ Q $ (including $ u $) are neighbors of $ v^{\prime} $, and all unfilled vertices in $ C $ are neighbors of $ u $. Therefore, if such a vertex $ v^{\prime} $ exists, all unfilled vertices belong to the same component. In this scenario, the graph on which the player can apply Rule 3 is equivalent to the original graph, meaning there is no distinction between Rule 3 and Rule 2. Now, let us assume there are no unfilled vertices in $ C $. Since we have already established that all vertices in $ Q' \setminus \{u\} $ must be filled, instead of filling vertex $ u $ using Rule 3, $ u $ can be forced to be filled by another vertex in $ Q' $ using Rule 2. Thus, if Rule 3 can be applied to fill vertex $ u $, it can be similarly done using Rule 2.

        \item \textbf{$v \in Q$ and $u \in C$:} 
        Assume there is an unfilled vertex $ w \in Q $. By the definition of a generalized windmill graph of Type II, $ w $ is a neighbor of all unfilled vertices in $ C $ (including $ u $), and $ u $ is a neighbor of all unfilled vertices in $ Q $. In this scenario, the graph on which the player can apply Rule 3 is equivalent to the original graph, meaning there is no distinction between Rule 3 and Rule 2. Now, let us assume all the unfilled vertices are in $ C $. For $ q \geq 1 $, the oracle can always choose any two of the components given by the player (note that each of these components consists of a single vertex in $ C $). Since all vertices in $ Q $ are connected to both of these chosen components, Rule 3 cannot be applied to any vertex. Therefore, we have shown that for $q \geq 1$, if Rule 3 is applied at any step, it can be replaced with Rule 2. However, for $ q = 0 $, the player can select one vertex (component) from $ C $ at each step, and the oracle would have no choice but to announce this given component. In this case, the player can apply Rule 3 to force the chosen vertex to be filled by any vertex in $ Q $.
    \end{itemize}

    We have shown that for $q \geq 1$, any optimal strategy for the player can be transformed into a strategy that uses only Rule 2 while spending the same number of tokens (since Rule 1 was not considered in any of the mentioned scenarios). Therefore, the new version of the strategy remains optimal, and we have shown that $Z_1(G) = Z(G)$. Now we prove that $Z(G) = \eta \times (k-1) + l - 1$.

    We have shown that when applying Rule 2 to any vertex in $Q$, all the vertices in its clique must already be filled. Therefore, we must spend $k-1$ tokens for each clique to color all its vertices except one, which concludes that we must at least spend $\eta(k-1)$ tokens in $Q$. Additionally, to force a vertex in $C$ to be filled, we have shown that all the other vertices in $C$ must already be filled, which concludes that we must spend at least $l-1$ tokens in $C$. 
    
    Now, we can assume that all the vertices in $C$ except one, and all the vertices in each clique except one, are filled. It can be shown that in this state, none of the unfilled vertices can be forced to be filled. If we spend one more token to fill an unfilled vertex in $Q$, all the vertices in one clique, such as $Q'$, get filled. Now, any vertex in $Q'$ can force the unfilled vertex in $C$ to be filled, and after that, all the unfilled vertices in each clique can be forced to be filled by a vertex in its clique. Therefore, we have proved that $Z(G) = \eta \times (k-1) + l$.
    
    For the $q = 0$ case, due to the previous arguments, it is sufficient to consider strategies where Rule 3 is only used to force vertices in $C$ when all the vertices in $Q$ are already filled. We have already shown that to force a vertex in $Q$, all the other vertices in its clique must already be filled. Therefore, we must spend at least $k-1$ tokens in each clique, which sums up to $\eta \times (k-1)$.
    
    After filling all the vertices except one in each clique, assume an arbitrary unfilled vertex $v \in Q$. (Note that in order to apply Rule 3, we need to reach the state where all of $Q$ is filled, and we are only considering strategies that use Rule 2 to fill the vertices in $Q$.) To fill this vertex, we have two possibilities: either we force it to be filled using a vertex in its clique or we fill it using a vertex in $C$. 
    
    If we force it by a vertex in its clique, as we have shown before, we need the whole set $C$ to be filled. In this case, we do not need to apply Rule 3 anymore, and we end up using $Z(G) = \eta \times (k-1) + l$ tokens.
    
    If we force it by a vertex in $C$, we need $Q \setminus \{v\}$ to be filled. Therefore, we need $k$ more tokens: $k-1$ tokens to fill the cliques, and one token to fill one vertex in $C$ to use it to force $v$ to be filled. In this case, we can simply assume we have filled all the vertices in $Q$ by spending tokens on them, which is $\eta \times k$ tokens.

    After filling all the vertices in $Q$, the player can use Rule 3 and choose one vertex at a time as its chosen component. The oracle would announce the same vertex, which can be filled using any vertex in $Q$, and this would use $\eta \times k$ tokens. 

    Therefore, we have proved the desired result.
\end{proof}

\begin{remark} \label{obs: edge case windmill}
    Let $G := W''(\eta, k, l)$ be a generalized windmill graph of type II. If $\eta = 1$, then $W''(1, k, l) = W'(l, 1, k)$, whose exact value of $Z_q$ is given in Theorem \ref{Theorem: windmill edge case}. If $k = 1$, then $W''(\eta, 1, l) = K_{\eta, l}$, and the exact value of $Z_q$ for complete bipartite graphs is provided in \cite[Section~4.2]{fallat2024q}. Finally, if $l = 1$, then $W''(\eta, k, 1) = W'(\eta, k, 1)$, which is covered in \ref{Theorem: windmill I}.
\end{remark}

%%%%%%%%%%%%%%%%%%%%%%%%%%%%%%%%%%%%%%%%%%%%%%%%%%%%%%%%%%%%%%%%%%%%%%%%%%%%%
\subsection*{Acknowledgements}
%%%%%%%%%%%%%%%%%%%%%%%%%%%%%%%%%%%%%%%%%%%%%%%%%%%%%%%%%%%%%%%%%%%%%%%%%%%%%
Aida Abiad is supported by NWO (Dutch Research Council) through the grants \linebreak VI.Vidi.213.085 and OCENW.KLEIN.475. The authors thank the referees, whose feedback helped improving the paper.

 %%%%%%%%%%%%%%%%%%%%%%%%%%%%%%%%%%%%%%%%%%%%%%%%%%%%%%%%%%%%%%%%%%%%%%%%%%%%%%%%%%%%%

%%%%%%%%%%%%%%%%%%%%%%%%%%%%%%%%%%%%%%%%%%%%%%%%%%%%%%%%%%%%%%%%%%%%%%%%%%%%%%%%%%%%%

\newpage
%%%%%%%%%%%%%%%%%%%%%%%%%%%%%%%%%%%%%%%%%%%%%%%%%%%%%%%%%%%%%%%%%%%%%%%%%%%%%%%%%%%%%
\section{Appendix}
%%%%%%%%%%%%%%%%%%%%%%%%%%%%%%%%%%%%%%%%%%%%%%%%%%%%%%%%%%%%%%%%%%%%%%%%%%%%%%%%%%%%%

%%%%%%%%%%%%%%%%%%%%%%%%%%%%%%%%%%%%%%%%%%%%%%%%%%%%%%%%%%%%%%%%%%%%%%%%%%%%%%%%%%%%%
\subsection{A faster algorithm for computing \texorpdfstring{\(Z_q\)}{Zq} for block graphs} \label{apx: faster zq block graphs}
%%%%%%%%%%%%%%%%%%%%%%%%%%%%%%%%%%%%%%%%%%%%%%%%%%%%%%%%%%%%%%%%%%%%%%%%%%%%%%%%%%%%%

In Theorem~\ref{Theorem: Z algorithm block graph}, the algorithm proceeds as follows: in each round, it identifies a block (clique) with only one shared vertex (using a naive approach, this step has a time complexity of $O(n^2)$), and then the algorithm performs the specified operations on the vertices of the found block. The block is then removed, marking the end of the round, and the process repeats until the graph becomes empty. Since the number of rounds is approximately $O(n)$, the total processing time using the naive approach is $O(n^3)$. Below, we propose an improved approach that reduces the overall time complexity from $O(n^3)$ to $O(n+m)$.

To compute $Z_q$ for block graphs, we first identify the blocks of the graph and then provide an ordering of these blocks, so that we avoid repeatedly searching for an appropriate block during the computation. The following algorithm can be used to identify the blocks efficiently and provide an ordering of blocks.

For a vertex $v$, we use $\mathrm{disc}[v]$ to denote its discovery time in a DFS traversal, i.e., the timestamp at which $v$ is first visited. We let $\mathrm{low}[v]$ denote the smallest discovery time reachable from $v$, considering $v$ itself, its descendants in the DFS tree, its parent, and possibly its ancestors via a back edge.

% ---------- Algorithm 1 ----------
\begin{algorithm}[H]
\caption{Find-Blocks($G$)}
\label{alg:find-cliques}
\begin{algorithmic}[1]
\Require Connected block graph $G = (V,E)$
\Ensure Set of maximal cliques $\mathcal{C}$ (the blocks)
\State $S \gets \varnothing $ \Comment{Stack of edges}
\State $\mathcal{C} \gets \varnothing$ \Comment{Queue of blocks}
\State Initialize $\text{disc}[v] \gets -1$, $\text{low}[v] \gets -1$ for all $v \in V$
\State $\text{time} \gets 0$
\State \Call{DFS}{$v$, \text{parent} = NIL} \Comment{for an arbitrary vertex $v \in V$)}

\State \Return $\mathcal{C}$
\end{algorithmic}
\end{algorithm}

% ---------- Algorithm 2 ----------
\begin{algorithm}[H]
\caption{DFS($v$, parent)}
\label{alg:dfs}
\begin{algorithmic}[1]
\State $\text{time} \gets \text{time} + 1$
\State $\text{disc}[v] \gets \text{time}$, $\text{low}[v] \gets \text{time}$
\State $S.\text{push} ((v,u))$
\For{each neighbor $u$ of $v$}
    \If{$\text{disc}[u] = -1$} \Comment{$u$ not visited}
        \State \Call{DFS}{$u$, $v$}
        \State $\text{low}[v] \gets \min(\text{low}[v], \text{disc}[u])$
        
        \If{$\mathrm{low}[u] \geq \mathrm{disc}[v]$}
            \State Pop edges from $S$ until $(v,u)$ is removed.
            \State $B \gets \{ x \in V : x \text{ incident to popped edges}\}$
            \State $\mathcal{C}.\text{push}(B)$
    \EndIf
    \ElsIf{$u \neq \text{parent}$} \Comment{Back edge}
        \State $\text{low}[v] \gets \min(\text{low}[v], \text{disc}[u])$
    \EndIf
\EndFor
\end{algorithmic}
\end{algorithm}

In a DFS traversal of a block graph, line~8 of Algorithm~\ref{alg:dfs} occurs when $v$ is the first vertex of its clique to be discovered; the traversal then visits all vertices of this clique before returning to $v$. Algorithm~\ref{alg:find-cliques} records the blocks in exactly the order in which the cliques are identified, the discovery proceeds in a DFS-like finish order: a leaf clique along with its siblings is explored first, followed by their parent clique, then the leaves of the parent’s sibling cliques, then those siblings, and so on. Hence, the steps of Theorem~\ref{Theorem: Z algorithm block graph} can be applied directly to the blocks in the order given by $\mathcal{C}$. At each step, the block at the top of $\mathcal{C}$ has exactly one shared vertex, so it can be processed and then removed. By construction, the new top block also has a single shared vertex, ensuring that no additional search for a suitable clique is required.

The time complexity of Algorithm \ref{alg:find-cliques}, which calls Algorithm \ref{alg:dfs}, is $O(n+m)$. The DFS itself runs in $O(n+m)$ time, and each edge is added to $S$ once and popped once, while each vertex is added to $\mathcal{C}$ as a member of some set $B$ exactly once. Hence, the overall running time is $O(n+m + n + m) = O(n+m)$. Moreover, as stated in Theorem \ref{Theorem: Z algorithm block graph}, the total number of rounds of the operations is $O(n)$. After this preprocessing, which provides an ordering of the blocks, finding a block with only one shared vertex takes $O(1)$ time. Therefore, the total time complexity for computing $Z$ on a block graph $G$ is $O(n+m) + O(n) = O(n+m)$. The implementation of this pseudocode can be found in this \href{https://github.com/sanazmgh/Zq-Forcing-Number-Algorithms}{GitHub repository}.
We ran the algorithm on several block graphs, and the empirical results are presented in the following table.
\begin{table}[H]
\centering
\begin{tabular}{|c c c c c |} 
 \hline
$n$ & $m$ & No. blocks & running time (s) & $Z_0(G)$ \\
\hline
    10   & 30   & 3    & 0.000039   & 7 \\
    50   & 835   & 7   & 0.000287 & 43\\
    100   & 2912   & 10  & 0.001084 & 90 \\
    500   & 41277   & 15   & 0.011006   & 485 \\
    1000  & 405099   & 6   & 0.127769 & 994 \\
\hline
\end{tabular}
\end{table}

\newpage

%%%%%%%%%%%%%%%%%%%%%%%%%%%%%%%%%%%%%%%%%%%%%%%%%%%%%%%%%%%%%%%%%%%%%%%%%%%%%%%%%%%%%
\subsection{Pseudocode of the algorithm for computing \texorpdfstring{\(Z_0\)}{Z0} for cactus graphs}\label{apx: code cactus}
%%%%%%%%%%%%%%%%%%%%%%%%%%%%%%%%%%%%%%%%%%%%%%%%%%%%%%%%%%%%%%%%%%%%%%%%%%%%%%%%%%%%%

In Subsection~\ref{sec:polyalgocactusgraphs}, we introduced a recursive procedure for computing $Z_0$ for a given cactus graph $G$. This procedure considers the tree representation of the cactus graph and evaluates all possible rootings of this tree. For each rooting, the calculation proceeds in the order of the DFS finishing times (i.e., the time when the algorithm finishes processing a vertex), computing the corresponding temporary value of $Z_0$. The minimum over all temporary values of $Z_0$ is then returned as the final result.

Below we provide the pseudocode for the procedure. Here, $\text{val}[v][i][j]$ corresponds to the quantity $Z_0(C,i,j)$ introduced in Subsection~\ref{sec:polyalgocactusgraphs}. For clarity in the pseudocode, we also use $\text{dp}[v][0]$ and $\text{dp}[v][1]$, denoting respectively the number of tokens required when vertex $v$ is not filled by its parent and when it is filled by its parent.

% ---------- Algorithm 1 ----------

\begin{algorithm}[H]
\caption{Compute $Z_0(G)$}
\label{alg:find-z0}
\begin{algorithmic}[1]
\Require Connected cactus graph $G = (V,E)$
\Ensure The value of $Z_0(G)$
\State $\text{ans} \gets \text{INF}$
\For{each $v\in V$}
    \State Initialize $S \gets \varnothing $ \Comment{Stack of edges}
    \State Initialize $\text{disc}[v] \gets -1$, $\text{low}[v] \gets -1$ for all $v \in V$
    \State $\text{time} \gets 0$
    \State Initialize $\text{val}[u][i][j] \gets -1 $ for all $u \in V$, $i \in \{0,1,2\}$, and $j\in \{0,1\}$
    \State Initialize $\text{dp}[u][i] \gets 0$ for all $u \in V$, and $i \in \{0,1\}$
    \State \Call{DFS}{$v$, \text{parent} = NIL}
    \State $\text{ans} \gets \text{min}(\text{ans}, \text{dp}[v][0])$
\EndFor
\State \Return ans
\end{algorithmic}
\end{algorithm}

% ---------- Algorithm 2 ----------
\begin{algorithm}[H]
\caption{DFS($v$, parent)}
\label{alg:cycle-tree}
\begin{algorithmic}[1]
\State $\text{time} \gets \text{time} + 1$
\State $\text{disc}[v] \gets \text{time}$, $\text{low}[v] \gets \text{time}$
\State $S.\text{push} ((v,u))$
\State Initialize $\text{min-dp} \gets \text{INF}$
\For{each neighbor $u$ of $v$}
    \If{$\text{disc}[u] = -1$} \Comment{$u$ not visited}
        \State \Call{DFS}{$u$, $v$}
        \State $\text{low}[v] \gets \min(\text{low}
        [v], \text{disc}[u])$
        \If{$\mathrm{low}[u] \geq \mathrm{disc}[v]$}
            \State Initialize $\text{sum} \gets 0 \text{, min1} \gets INf\text{, min2} \gets INF$
            \State Pop edges from $S$ until $(v,u)$ is removed.
            \State $C \gets \{ x \in V : x \text{ incident to popped edges\}}$
            \State $\text{sum} \gets (\sum \text{dp}[x][1] : x \in C \text{, dp}[x][1] \neq -1)$
            \State $\text{min1} \gets \text{minimum of } (\text{dp}[x][0] - \text{dp}[x][1]) \text{, where } x \in C \text{, dp}[x][1] \neq -1)$
            \State $\text{min2} \gets \text{second minimum of } (\text{dp}[x][0] - \text{dp}[x][1]) \text{, where } x \in C \text{, dp}[x][1] \neq -1)$
            \If{$|C|=2$} $C$ \Comment{$C$ is a single-edge block}
                \State $\text{val}[v][0][0]= 1 + \text{sum}$
                \State $\text{val}[v][0][1]= \text{sum} + \text{min1}$
                \State $\text{val}[v][1][0]= \text{sum}$
            \Else \Comment{$C$ is a cycle block} 
                \State $\text{val}[v][0][0]=2+ \text{sum}$
                \State $\text{val}[v][0][1]=1+ \text{sum} + \text{min1}$
                \State $\text{val}[v][0][2]= \text{sum} + \text{min1} + \text{min2}$
                \State $\text{val}[v][1][0]=1+ \text{sum} $
                \State $\text{val}[v][1][1]=\text{sum}+\text{min1} $
    \EndIf
    \State $\text{dp}[v][0] = \text{dp}[v][0] + \min(\text{val}[v][1][0], \text{val}[v][1][1])$   \State $\text{dp}[v][1] = \text{dp}[v][1] + \min(\text{val}[v][1][0], \text{val}[v][1][1])$    \State $\text{min-dp} \gets \min(\text{min-dp}, \min(\text{val}[v][0][i]-\text{val}[v][1][j] : \text{for } i\in \{0,1,2\}, j \in \{0,1\} \text{ where val}[v][1][j] \leq \text{val}[v][1][(j+1)\mod{2}]))$
        \EndIf
    \ElsIf{$u \neq \text{parent}$} \Comment{Back edge}
        \State $\text{low}[v] \gets \min(\text{low}[v], \text{disc}[u])$
    \EndIf
\EndFor

\State $\text{dp}[v][0] \gets \text{dp}[v][0]+ \text{min-dp}$

\end{algorithmic}
\end{algorithm}

In Algorithm~\ref{alg:find-z0}, all variables are first initialized, and then DFS is called from every vertex. In this way, we consider all possible rootings of the graph $G$, compute the temporary value of $Z_0$ for each rooted graph, and finally return the minimum of these values.

In Algorithm~\ref{alg:cycle-tree}, cycles are detected during the DFS whenever a back edge is encountered (line~9). Once a cycle is identified, we compute $\text{val}[v][i][j]$ for its vertices.Since a vertex $v$may be shared by several cycles attached to v in the rooted tree, $\text{val}[v][i][j]$ is computed for every cycle containing $v$, and the results are then aggregated into the variables $\text{dp}[v][0]$ and $\text{dp}[v][1]$.\\
Algorithm~\ref{alg:find-z0} calls Algorithm~\ref{alg:cycle-tree} $O(n)$ times, while all other operations do not affect the overall complexity. Thus, there are $O(n)$ invocations of Algorithm~\ref{alg:cycle-tree}. Each call to Algorithm~\ref{alg:cycle-tree} runs in $O(n+m)$ time, since it performs a DFS with additional $O(1)$-time operations. For cycle detection, observe that the vertices of each cycle are processed once during the DFS, and each edge is pushed onto and popped from the stack $S$ exactly once. Therefore, the running time is $O(n+m)$. In cactus graphs, we have $O(n) = O(m)$, so $O(n+m) = O(n)$. Consequently, the overall time complexity of computing $Z_0$ for a cactus graph is $O(n) \cdot O(n) = O(n^2)$.

The implementation of this pseudocode can be found in this \href{https://github.com/sanazmgh/Zq-Forcing-Number-Algorithms}{GitHub repository}. We ran the algorithm on several cactus graphs, and the empirical results are presented in the following table.

\begin{table}[H]
\centering
\begin{tabular}{|c c c c c |} 
 \hline
$n$ & $m$ & No. cycles & running time (s) & $Z_0(G)$ \\
\hline
    10   & 12   & 3    & 0.000651   & 4 \\
    50   & 65   & 16   & 0.015680 & 17\\
    100   & 132   & 33  & 0.004 & 34 \\
    500   & 20   & 25   & 0.056641   & 26 \\
    1000  & 1295   & 296   & 13.822404 & 297 \\
    2000 & 2530  & 531  & 83.075469 & 532 \\
\hline
\end{tabular}
\end{table}

%%%%%%%%%%%%%%%%%%%%%%%%%%%%%%%%%%%%%%%%%%%%%%%%%%%%%%%%%%%%%%%%%%%%%%%%%%%%%%%%%%%%%
\end{document}